\def\bjga#1{\def\thefootnote{}\footnote{
    \hspace*{-.54cm} $~~~~~$The author is thankful to the National Board for Higher Mathematics (NBHM), India, for their financial support(Ref No: 0203/11/2017/RD-II/10440) to carry on this research work.}
    \addtocounter{footnote}{-1}\def\thefootnote{\arabic{footnote}}}
\def\mymaketitle#1{\date{\bjga{#1}}
\begin{document}\maketitle\thispagestyle{empty}}

\def\babs{\begin{abstractv}}
\def\eabs{\end{abstractv}}

\theoremstyle{definition}
\newtheorem{definition}{Definition}[section]
\newtheorem{example}[definition]{Example}
\newtheorem{remark}[definition]{Remark}
\newtheorem{problem}[definition]{Problem}

\theoremstyle{theorem}
\newtheorem{theorem}{Theorem}[section]
\newtheorem{proposition}[theorem]{Proposition}
\newtheorem{corollary}[theorem]{Corollary}
\newtheorem{lemma}[theorem]{Lemma}

\def\pas{\par\smallskip}
\def\pasn{\par\smallskip\noindent}
\def\pam{\par\medskip}
\def\pamn{\par\medskip\noindent}
\def\pab{\par\bigskip}

\def\header#1#2#3#4#5{
    \markboth{#3}{#4}
    \title{#5}
    \author{#3}
    \date{}
    \mymaketitle{#1-#2}}
\newenvironment{abstractv}{\begin{quote}{\bf Abstract.\ }}{\end{quote}}
\def\msc{{\bf M.S.C. 2010}:\ }
\def\kwd{\\{\bf Key words}:\ }
\def\aua{\par\noindent{\em Author's address:}\pam\noindent}
\def\auas{\par\noindent{\em Authors' addresses:}\pam\noindent}
\def\auac{\par\noindent{\em Authors' address:}\pam\noindent}
\def\bece{\begin{center}}
\def\eece{\end{center}}
\def\bebi{}
\def\bibi#1{\bibitem{#1}}

\newcommand{\R}{\mathbb{R}}
\newcommand{\Rt}{\mbox{{\em $\R$}}}
\newcommand{\Rs}{\mbox{\tiny{\R}}}
\newcommand{\C}{\mathbb{C}}
\newcommand{\Cs}{\mbox{\tiny{\C}}}
\newcommand{\Q}{\mathbb{Q}}
\newcommand{\Z}{\mathbb{Z}}
\newcommand{\Zs}{\mbox{\tiny{\Z}}}
\newcommand{\N}{\mathbb{N}}
\def\Ne{\mbox{\em $\N$}}
\newcommand{\Ns}{\mbox{\tiny{\N}}}
\newcommand{\Cbar}{\bar{\mathbf{C}}}
\def\DD{\;\mbox{D}\!\!\!\!\!\!\mbox{I}\;\;\,}
\def\DDs{\mbox{{\scriptsize$\;\mbox{D}\!\!\!\!\!\!\mbox{I}\;\;\,$}}}

\def\noi{\noindent}
\def\qq{\qquad}
\def\mm{\medskip\\}
\def\lg{\langle}
\def\rg{\rangle}
\def\ra{\Righarrow}
\def\lra{\Leftrightarrow}
\def\ri{\rightarrow}
\def\fall{\mbox{ for all }}
\def\di{\displaystyle}
\def\vp{\varphi}
\def\al{\alpha}
\def\ol#1{\overline{#1}}
\def\qed{\hfill$\Box$}
\def\bref#1{(\ref{#1})}
\def\midd{\hspace*{-10pt}\left.\phantom{\di\int}\right|}
\def\text#1{\mbox{#1}}
\def\emph#1{{\em #1}}
\def\textit#1{{\em #1}}
\def\textbf#1{{\bf #1}}
\def\func#1{\mathop{\rm #1}}
\def\limfunc#1{\mathop{\rm #1}}
\def\dint{\displaystyle\int}
\def\dsum{\displaystyle\sum}
\def\dfrac{\displaystyle\frac}
\def\Bbb#1{\mathbb{#1}}
\def\bu{$\bullet$\ }
\def\sta{$\star$\ }
\def\ii{\,\mbox{i}\,}
\def\lb{\linebreak}
\def\pr{{}^{\prime}}
\def\imath{\mbox{i}}

\def\ba{\begin{array}}
\def\ea{\end{array}}
\def\beq{\begin{equation}}
\def\eeq{\end{equation}}
\def\zx#1{\begin{equation}\label{#1}}
\def\zc{\end{equation}}
\def\aru#1{\left\{{\begin{array}{l}#1\end{array}}\right.}
\def\matd#1{\left(\begin{array}{cc}#1\end{array}\right)}

\def\shw{\scriptscriptstyle}
\def\stkdn#1#2{{\mathop{#2}\limits^{}_{#1}}{}}
\def\dn#1#2{\stkdn{{\shw #1}}{#2}{}}
\def\stkup#1#2{{\mathop{#1}\limits^{#2}_{}}{}}
\def\up#1#2{\stkup{#1}{\shw #2}{}}
\def\stkud#1#2#3{{\mathop{#2}\limits^{#3}_{#1}}{}}
\def\ud#1#2#3{\stkud{\shw #1}{#2}{\shw #3}}
\def\stackunder#1#2{\mathrel{\mathop{#2}\limits_{#1}}}

\def\emptyset{\begin{picture}(13,10) \unitlength1pt
    \put(5,3){\circle{7}}\put(.7,-1.8){\line(1,1){10}}\end{picture}}

\def\fiu#1#2#3{\begin{center}
    \includegraphics[scale=#1]{#2.eps}\nopagebreak\\\parbox{10cm}
    {\begin{center}\small\bf #3\end{center}}\end{center} }
\def\fid#1#2#3#4{\begin{center}
    \includegraphics[scale=#1]{#2.eps}\hspace*{.7cm}
    \includegraphics[scale=#1]{#3.eps}
    \nopagebreak\\\parbox{14cm}{\begin{center}
    \small\bf #4\end{center}}\end{center} }
\def\fit#1#2#3#4#5{\begin{center}\begin{tabular}{ccc}
    \includegraphics[scale=#1]{#2.eps} &\hspace*{.4cm}&
    \includegraphics[scale=#1]{#3.eps}\\
    {\small\bf #4}&&{\small\bf #5}\end{tabular}\end{center}}
\pagestyle{myheadings}
\setlength{\textheight}{20cm}
\setlength{\textwidth}{13cm}
\setlength{\oddsidemargin}{18mm}
\setlength{\evensidemargin}{18mm}
\renewcommand{\theequation}{\thesection.\arabic{equation}}
\makeatletter \@addtoreset{equation}{section} \makeatother

\header{30}{36}{Dipen Ganguly}
    {Kenmotsu metric as conformal $\eta$-Ricci soliton}
    {Kenmotsu metric as conformal $\eta$-Ricci soliton}
\babs
    The object of the present paper is to characterize the class of Kenmotsu manifolds which admits conformal $\eta$-Ricci soliton. Here, we have investigated the nature of the conformal $\eta$-Ricci soliton within the framework of Kenmotsu manifolds. It is shown that an $\eta$-Einstein Kenmotsu manifold admitting conformal $\eta$-Ricci soliton is an Einstein one. Moving further, we have considered gradient conformal $\eta$-Ricci soliton on Kenmotsu manifold and established a relation between the potential vector field and the Reeb vector field. Next, it is proved that under certain condition, a conformal $\eta$-Ricci soliton on Kenmotu manifolds under generalized D-conformal deformation remains invariant. Finally, we have constructed an example for the existence of conformal $\eta$-Ricci soliton on Kenmotsu manifold.
\eabs
\msc
    53C15, 53C25, 53C44.
\kwd
    Ricci flow; Ricci soliton; Conformal $\eta$-Ricci soliton; \\
    Kenmotsu manifold; Generalized $D$-conformal deformation

\section{Introduction}
$~~~$A smooth manifold $M$ equipped with a Riemannian metric $g$ is said to be a Ricci soliton, if for some constant $\lambda$, there exist a smooth vector field $V$ on $M$ satisfying the equation
\begin{equation}
Ric+\frac{1}{2}\mathcal{L}_{V}g=\lambda g,
\end{equation}
where $\mathcal{L}_{V}$ denotes the Lie derivative along the direction of the vector field $V$ and $Ric$ is the Ricci tensor. The Ricci soliton is called shrinking if $\lambda >0$, steady if $\lambda =0$ and expanding if $\lambda <0$. In 1982, R. S. Hamilton \cite{H1} initiated the study of Ricci flow as a self similar solution to the Ricci flow equation given by
\begin{equation}
\frac{\partial g}{\partial t}=-2Ric.\nonumber
\end{equation}
\par
Ricci soliton also can be viewed as natural generalization of Einstein metric which moves only by an one-parameter group of diffeomorphisms and scaling \cite{H2}. After Hamilton, the significant work on Ricci flow has been done by G. Perelman to prove the well known Thurston's geometrization conjecture. \par
\medskip
A. E. Fischer \cite{Fis} in 2005, introduced conformal Ricci flow equation which is a modified version of the Hamilton's Ricci flow equation that modifies the volume constraint of that equation to a scalar curvature constraint. The conformal Ricci flow equations on a smooth closed connected oriented n-manifold, $n\geq 3$, are given by
\begin{equation}
\frac{\partial g}{\partial t}+2(Ric+\frac{g}{n})=-pg,~~~~r(g)=-1,\nonumber
\end{equation}
where $p$ is a non-dynamical(time dependent) scalar field and $r(g)$ is the scalar curvature of the manifold. The term $-pg$ acts as the constraint force to maintain the scalar curvature constraint in the above equation. Note that these evolution equations are analogous to famous Navier-Stokes equations where the constraint is divergence free. The non-dynamical scalar $p$ is also called the conformal pressure.
\par
\medskip
Later in 2015, N. Basu and A. Bhattacharyya \cite{NBa} introduced the concept of conformal Ricci soliton as a generalization of the classical Ricci soliton and is given by the equation
\begin{equation}
\mathcal{L}_{V}g+2Ric=[2\lambda -(p+\frac{2}{n})]g,
\end{equation}
where $\lambda$ is a constant and $p$ is the conformal pressure. It is to be noted that the conformal Ricci soliton is a self-similar solution of the Fisher's conformal Ricci flow equation. After that a lot of work have been done on conformal Ricci solitons on various geometric structures like $(LCS)_n$-manifolds \cite{DG2} and generalized Sasakian space forms \cite{DG3}.
\par
\medskip
\medskip
Again a Ricci soliton is called a gradient Ricci soliton if the potential vector field $V$ in the equation (1.1) is the gradient of some smooth function $f$ on $M$. This function $f$ is called the potential function of the Ricci soliton. J. T. Cho and M. Kimura \cite{ChoK} introduced the concept of $\eta$-Ricci soliton and later C. Calin and M. Crasmareanu \cite{CaCra} studied it on Hopf hypersufaces in complex space forms. A Riemannian manifold $(M,g)$ is said to admit an $\eta$-Ricci soliton if for a smooth vector field $V$, the metric $g$ satisfies the following equation
\begin{equation}
\mathcal{L}_{V}g+2S+2\lambda g+2\mu \eta \otimes \eta=0,
\end{equation}
where $\mathcal{L}_{V}$ is the Lie derivative along the direction of $V$, $S$ is the Ricci tensor and $\lambda$, $\mu$ are real constants. It is to be noted that for $\mu=0$ the $\eta$-Ricci soliton becomes a Ricci soliton.\par
\medskip
Combining equations $(1.2)$ and $(1.3)$ M. D. Siddiqi \cite{MDS1} introduced the notion of conformal $\eta$-Ricci soliton given by the following equation
\begin{equation}
\mathcal{L}_{V}g+2S+[2\lambda -(p+\frac{2}{n})]g+2\mu \eta \otimes \eta=0,
\end{equation}
where $\mathcal{L}_{V}$ is the Lie derivative along the direction of $V$, $S$ is the Ricci tensor, $n$ is the dimension of the manifold, $p$ is the non-dynamical scalar field (conformal pressure) and $\lambda$, $\mu$ are real constants. In particular if $\mu=0$ the conformal $\eta$-Ricci soliton reduces to the conformal Ricci soliton.\par
\medskip
Furthermore, if the vector field $V$ is the gradient of some smooth function $f$, then the data $(g,V,\lambda,\mu)$ is called a gradient conformal $\eta$-Ricci soliton. Thus if $V=Df$, where $Df=gradf$, then the equation $(1.4)$ becomes
\begin{equation}
Hessf+S+[\lambda -(\frac{p}{2}+\frac{1}{n})]g+\mu \eta \otimes \eta=0,
\end{equation}
where $Hessf$ denotes the Hessian of the smooth function $f$. In this case the vector field $V$ is called the potential vector field and the smooth function $f$ is called the potential function.
\par
\medskip
On the other hand, contact geometry plays a pivotal role in the study of modern differential geometry. K. Kenmotsu \cite{KK} introduced a special class of contact Riemannian manifolds, satisfying certain conditions, which was later named as Kenmotsu manifold. Kenmotsu proved that a locally Kenmotsu manifold is a warped product $I\times_fN$ of an interval $I$ and a K\"{a}hler manifold $N$ with warping function $f(t)=ke^t$, where $k$ is a non-zero constant. Over the years many authors have studied the geometry of Kenmotsu manifolds \cite{DGPHui}. A. Ghosh \cite{AG} first investigated Kenmotsu $3$-metric as Ricci solitons. Motivated by the above studies, in this paper, we have considered conformal $\eta$-Ricci solitons on Kenmotsu manifolds.
\par
\medskip
The outline of the article goes as follows: After a brief introduction, in section $2$ we recall some basic results of Kenmotsu manifolds. Section $3$ deals with Kenmotsu manifolds admitting conformal $\eta$-Ricci solitons. In section $4$, we consider gradient conformal $\eta$-Ricci solitons on Kenmotsu manifolds. Section $5$ is devoted to the study of conformal $\eta$-Ricci solitons on Kenmotsu manifolds under generalized $D$-conformal deformation. In section $6$, we have studied torse-forming vector field on $\epsilon$-Kenmotsu manifolds admitting conformal $\eta$-Ricci solitons. Lastly, we have constructed an example to illustrate the existence of conformal $\eta$-Ricci soliton on Kenmotsu manifold.
\medskip
\medskip

\section{Preliminaries}
$~~~$Let $(M,g)$ be a $(2n+1)$-dimensional smooth Riemannian manifold that admits a $(1,1)$ tensor field $\phi$, a characteristic vector field $\xi$, a global 1-form $\eta$ on M satisfying the following relations
\begin{eqnarray}
  \phi^2 &=& -I+\eta\otimes\xi,~~~\eta(\xi)=1, \\
  \eta(X) &=& g(X,\xi), \\
  g(\phi X,\phi Y) &=& g(X,Y)-\eta(X)\eta(Y)
\end{eqnarray}
for all vector fields $X,Y\in TM$, where $TM$ is the tangent bundle of the manifold $M$. Then we say that $(M,g,\phi,\xi,\eta)$ is an almost contact metric manifold \cite{Bla}. Note that from the above relations $\phi(\xi)=0$, $\eta(\phi X)=0$ $\forall X\in TM$.\par
\medskip
 If the Levi-Civita connection $\nabla$ of an almost contact metric manifold $(M,g,\phi,\xi,\eta)$ satisfies
\begin{equation}
  (\nabla_X\phi)Y=g(\phi X,Y)-\eta(Y)\phi X,
\end{equation}
 $\forall X,Y\in TM$, then the manifold $(M,g,\phi,\xi,\eta)$ is said to be a Kenmotsu manifold \cite{KK}. Observe that for all smooth vector fields $X$ on a Kenmotsu manifold $M$, from $(2.4)$ we have
\begin{equation}
  \nabla_X\xi=X-\eta(X)\xi.
\end{equation}\par
\medskip
Furthermore, in a Kenmotsu manifold the following relations hold \cite{KK}
\begin{eqnarray}
  (\nabla_X\eta)Y &=& g(X,Y)-\eta(X)\eta(Y), \\
  R(X,Y)\xi &=& \eta(X)Y-\eta(Y)X, \\
  R(\xi,X)Y &=& \eta(Y)X-g(X,Y)\xi, \\
  \eta(R(X,Y)Z) &=& g(X,Z)\eta(Y)-g(Y,Z)\eta(X), \\
  S(X,\xi) &=& -2n\eta(X),
\end{eqnarray}
where $R$ is the curvature tensor, $S$ is the Ricci tensor and $Q$ is the Ricci operator given by $g(QX,Y)=S(X,Y)$, for all $X,Y\in TM$.\par
\medskip
\begin{definition}
  A Kenmotsu manifold $(M,g,\phi,\xi,\eta)$ is said to be an $\eta$-Einstein manifold if its Ricci tensor $S$ satisfies
  \begin{equation}
  S(X,Y)=\alpha g(X,Y)+\beta \eta(X)\eta(Y),
  \end{equation}
  for all $X,Y\in TM$ and smooth functions $\alpha,\beta$ on $M$. In particular, if $\beta=0$, then we say that the manifold $M$ is an Einstein manifold.
\end{definition}
\medskip
\medskip
\section{Kenmotsu manifolds admitting conformal $\eta$-Ricci solitons}
$~~~$In this section we consider a Kenmotsu manifolds admitting conformal $\eta$-Ricci solitons and first we try to characterize the nature of the soliton and then we study the effect of the soliton on the geometry of the underlying manifold.
\medskip
\par
 Let us first state the following lemma (for details see \cite{Venk}) which will be used later in this section.

\begin{lemma}
  Let $(M,g,\phi,\xi,\eta)$ be a $(2n+1)$-dimensional Kenmotsu manifold. Then the Ricci operator $Q$ satisfies the following relations
  \begin{eqnarray}
    (\nabla_XQ)\xi &=& -QX-2nX \\
    (\nabla_\xi Q)X &=& -2QX-4nX
  \end{eqnarray}
  for all smooth vector field $X\in TM$.
\end{lemma}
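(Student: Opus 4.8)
The plan is to record first the elementary identity $Q\xi=-2n\xi$, which follows from $(2.10)$ together with the symmetry of $Q$ and $(2.2)$, and then to prove the two formulas separately. For the first one I would expand the covariant derivative as $(\nabla_XQ)\xi=\nabla_X(Q\xi)-Q(\nabla_X\xi)$, substitute $Q\xi=-2n\xi$, and apply $\nabla_X\xi=X-\eta(X)\xi$ from $(2.5)$ to both terms; the two $\eta(X)\xi$-contributions cancel and one is left with $(\nabla_XQ)\xi=-QX-2nX$.

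For the second formula, which is the substantial part, I would invoke the contracted second Bianchi identity. With $\{e_i\}$ a local orthonormal frame it reads $\sum_i g\big((\nabla_{e_i}R)(X,Y)Z,e_i\big)=(\nabla_XS)(Y,Z)-(\nabla_YS)(X,Z)$, and I would specialize it to $X=\xi$. To handle the left-hand side I would differentiate the Kenmotsu curvature relation $(2.8)$, $R(\xi,Y)Z=\eta(Z)Y-g(Y,Z)\xi$, along an arbitrary vector field $U$; using $(2.5)$, $(2.6)$ and metric compatibility, every term carrying a derivative of $Y$ or $Z$ cancels and one obtains the closed form $(\nabla_UR)(\xi,Y)Z=g(U,Z)Y-g(Y,Z)U-R(U,Y)Z$. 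Contracting this over $U=e_i$ against $e_i$ and using $\sum_i g(R(e_i,Y)Z,e_i)=S(Y,Z)$ gives $\sum_i g\big((\nabla_{e_i}R)(\xi,Y)Z,e_i\big)=-2n\,g(Y,Z)-S(Y,Z)$.

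It then remains to assemble the pieces. The first formula already gives $(\nabla_YS)(\xi,Z)=g\big((\nabla_YQ)\xi,Z\big)=-S(Y,Z)-2n\,g(Y,Z)$, so substituting this and the previous computation into the contracted Bianchi identity and solving for $(\nabla_\xi S)(Y,Z)$ produces $(\nabla_\xi S)(Y,Z)=-2S(Y,Z)-4n\,g(Y,Z)$, which is exactly $(\nabla_\xi Q)Y=-2QY-4nY$. I expect the only real obstacle to be computational: checking that the many terms in the derivative of $(2.8)$ cancel as claimed, and keeping the sign conventions for the curvature tensor and for the Ricci contraction consistent so that the contracted Bianchi identity is applied in the correct form. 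Once the closed expression for $(\nabla_UR)(\xi,Y)Z$ is established, everything that follows is routine.
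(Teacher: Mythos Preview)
Your argument is correct. The first identity is a direct consequence of $Q\xi=-2n\xi$ and $(2.5)$, exactly as you indicate. For the second, your computation of $(\nabla_UR)(\xi,Y)Z=g(U,Z)Y-g(Y,Z)U-R(U,Y)Z$ checks out term by term, and the subsequent contraction together with the contracted second Bianchi identity and the already-established first formula yields $(\nabla_\xi S)(Y,Z)=-2S(Y,Z)-4n\,g(Y,Z)$ as desired.

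As for comparison with the paper: the paper does not actually prove this lemma. It is quoted with the phrase ``for details see \cite{Venk}'' and no argument is supplied. So your write-up is not just an alternative route but a genuine self-contained proof where the paper offers none. The approach you chose---via differentiation of the explicit curvature relation $(2.8)$ and the contracted Bianchi identity---is natural and is essentially the standard way these formulas are derived in the literature on Kenmotsu manifolds; it is very likely close in spirit to what appears in the cited reference. The only caution, which you already flagged yourself, is to keep the sign and contraction conventions consistent; with the conventions used in this paper (namely $S(Y,Z)=\sum_i g(R(e_i,Y)Z,e_i)$), everything lines up.
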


Now we are going to find the conditions under which a conformal $\eta$-Ricci soliton is shrinking, steady or expanding on a Kenmotsu manifold.

\begin{theorem}
  A conformal $\eta$-Ricci soliton $(g,V,\lambda,\mu)$, on a $(2n+1)$-dimensional Kenmotsu manifold $(M,g,\phi,\xi,\eta)$, is shrinking if $\mu <2n+(\frac{p}{2}+\frac{1}{2n+1})$, steady if $\mu =2n+(\frac{p}{2}+\frac{1}{2n+1})$ and expanding if $\mu >2n+(\frac{p}{2}+\frac{1}{2n+1})$.
\end{theorem}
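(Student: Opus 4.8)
The plan is to evaluate the conformal $\eta$-Ricci soliton equation $(1.4)$ on the pair $(\xi,\xi)$ and extract the sign of $\lambda$ from the resulting scalar identity. First I would write $(1.4)$ in the symmetric bilinear form
\begin{equation}
(\mathcal{L}_V g)(X,Y)+2S(X,Y)+\left[2\lambda-\left(p+\frac{2}{2n+1}\right)\right]g(X,Y)+2\mu\,\eta(X)\eta(Y)=0,\nonumber
\end{equation}
and then substitute $X=Y=\xi$. Using $g(\xi,\xi)=1$, $\eta(\xi)=1$ and the Kenmotsu identity $(2.10)$, namely $S(\xi,\xi)=-2n$, the middle two terms become $-4n+2\lambda-(p+\frac{2}{2n+1})+2\mu$, so everything reduces to computing $(\mathcal{L}_V g)(\xi,\xi)$.

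The key step is therefore to show $(\mathcal{L}_V g)(\xi,\xi)=0$. I would compute
\begin{equation}
(\mathcal{L}_V g)(\xi,\xi)=2g(\nabla_\xi V,\xi)=2\,\xi\big(g(V,\xi)\big)-2g(V,\nabla_\xi\xi),\nonumber
\end{equation}
and then invoke $(2.5)$, which gives $\nabla_\xi\xi=\xi-\eta(\xi)\xi=0$. For the remaining term one uses that $g(\xi,\xi)=1$ is constant together with the standard computation $(\mathcal{L}_V g)(X,Y)=g(\nabla_X V,Y)+g(\nabla_Y V,X)$; taking $X=Y=\xi$ and using $\nabla_\xi\xi=0$ shows the contribution vanishes. (Equivalently, one notes $\mathcal{L}_V(g(\xi,\xi))=0$ and expands.) This is the one spot that needs a little care, so I expect the vanishing of $(\mathcal{L}_V g)(\xi,\xi)$ — or rather presenting it cleanly without circular reasoning about $V$ — to be the main, though modest, obstacle.

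With $(\mathcal{L}_V g)(\xi,\xi)=0$ in hand, the soliton equation at $(\xi,\xi)$ collapses to
\begin{equation}
-4n+2\lambda-\left(p+\frac{2}{2n+1}\right)+2\mu=0,\nonumber
\end{equation}
which solves to
\begin{equation}
\lambda=2n+\left(\frac{p}{2}+\frac{1}{2n+1}\right)-\mu.\nonumber
\end{equation}
Finally I would read off the trichotomy: $\lambda>0$ exactly when $\mu<2n+(\frac{p}{2}+\frac{1}{2n+1})$ (expanding case in the convention $\lambda>0\Rightarrow$ shrinking, i.e. shrinking), $\lambda=0$ when equality holds (steady), and $\lambda<0$ when $\mu>2n+(\frac{p}{2}+\frac{1}{2n+1})$ (expanding), which is precisely the assertion of the theorem. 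Note that Lemma~2.2 is not actually needed for this particular statement; it will presumably enter in the later results of the section.
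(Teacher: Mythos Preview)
Your attempt has a genuine gap at precisely the place you yourself flagged. The identity $(\mathcal{L}_V g)(\xi,\xi)=0$ is \emph{not} a consequence of the Kenmotsu structure alone: for an arbitrary vector field $V$ on a Kenmotsu manifold one has
\[
(\mathcal{L}_V g)(\xi,\xi)=2g(\nabla_\xi V,\xi)=2\,\xi\bigl(\eta(V)\bigr),
\]
and there is no reason for $\xi(\eta(V))$ to vanish (take for instance $V=f\xi$ with $\xi f\neq 0$). Your two suggested justifications both fail. Using $\nabla_\xi\xi=0$ kills only the term $g(V,\nabla_\xi\xi)$, not $g(\nabla_\xi V,\xi)$. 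And expanding $\mathcal{L}_V\bigl(g(\xi,\xi)\bigr)=0$ yields $(\mathcal{L}_V g)(\xi,\xi)=-2\eta(\mathcal{L}_V\xi)$, which merely restates the unknown quantity in another form. In fact, evaluating the soliton equation at $(\xi,\xi)$ shows that $(\mathcal{L}_V g)(\xi,\xi)=0$ is \emph{equivalent} to the relation $\lambda+\mu=2n+\bigl(\tfrac{p}{2}+\tfrac{1}{2n+1}\bigr)$ you are trying to prove, so the argument as written is circular.

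The paper closes this gap by going to second order. It differentiates the soliton equation, uses Yano's commutation formula to obtain $(\mathcal{L}_V\nabla)(X,\xi)=2QX+4nX$, and then, with the lemma giving $(\nabla_X Q)\xi=-QX-2nX$ and $(\nabla_\xi Q)X=-2QX-4nX$, deduces $(\mathcal{L}_V R)(X,\xi)\xi=0$. Independently, Lie differentiating the Kenmotsu identity $R(X,\xi)\xi=\eta(X)\xi-X$ gives
\[
(\mathcal{L}_V R)(X,\xi)\xi=\Bigl[2\lambda+2\mu-4n-\bigl(p+\tfrac{2}{2n+1}\bigr)\Bigr]\bigl(X-\eta(X)\xi\bigr).
\]
Equating the two expressions forces the scalar relation, after which your final trichotomy step is correct. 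So the lemma on $\nabla Q$ that you dismissed as unnecessary is exactly what drives the proof; a first-order evaluation at $(\xi,\xi)$ cannot, by itself, determine $\lambda$.
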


\begin{proof}
Since the data $(g,V,\lambda,\mu)$ is a conformal $\eta$-Ricci soliton, from $(1.4)$ it follows that
\begin{equation}
(\mathcal{L}_{V}g)(X,Y)+2S(X,Y)+[2\lambda -(p+\frac{2}{2n+1})]g(X,Y)+2\mu \eta(X)\eta(Y)=0,
\end{equation}
 for all smooth vector fields $X,Y\in TM$.\\
 Taking covariant differentiation of $(3.3)$ along an arbitrary vector field $W$ we get
 \begin{eqnarray}
(\nabla_W\mathcal{L}_{V}g)(X,Y) &=& -2(\nabla_WS)(X,Y)-2\mu[g(W,X)\eta(Y) \nonumber\\
&& +g(W,Y)\eta(X)-2\eta(W)\eta(X)\eta(Y)].
\end{eqnarray}
Let us now recall the well known commutation formula by Yano (See \cite{Ya})
\begin{equation}
(\mathcal{L}_{V}\nabla_Xg-\nabla_X\mathcal{L}_{V}g-\nabla_{[V,X]}g)(Y,Z)=-g((\mathcal{L}_{V}\nabla)(X,Y),Z)-g((\mathcal{L}_{V}\nabla)(X,Z),Y),
\end{equation}
for all smooth vector fields $X,Y,Z\in TM$.\\
As $g$ is a metric connection, $\nabla g=0$, and using this in $(3.5)$ yields
\begin{equation}
(\nabla_X\mathcal{L}_{V}g)(Y,Z)=g((\mathcal{L}_{V}\nabla)(X,Y),Z)+g((\mathcal{L}_{V}\nabla)(X,Z),Y).\nonumber
\end{equation}
Since $\mathcal{L}_{V}\nabla$ is symmetric, the previous equation reduces to
\begin{equation}
g((\mathcal{L}_{V}\nabla)(X,Y),Z)=\frac{1}{2}[(\nabla_X\mathcal{L}_{V}g)(Y,Z)+(\nabla_Y\mathcal{L}_{V}g)(Z,X)-(\nabla_Z\mathcal{L}_{V}g)(X,Y)].
\end{equation}
Recalling $(3.4)$ and using it in $(3.6)$ we obtain
\begin{eqnarray}
g((\mathcal{L}_{V}\nabla)(X,Y),Z) &=& (\nabla_ZS)(X,Y)-(\nabla_XS)(Y,Z)-(\nabla_YS)(Z,X) \nonumber\\
&& -2\mu[g(X,Y)\eta(Z)-\eta(X)\eta(Y)\eta(Z)].
\end{eqnarray}
Putting $Y=\xi$ in $(3.7)$ we get
\begin{equation}
g((\mathcal{L}_{V}\nabla)(X,\xi),Z)=(\nabla_ZS)(X,\xi)-(\nabla_XS)(\xi,Z)-(\nabla_\xi S)(Z,X).
\end{equation}
Again we know that
\begin{equation}
(\nabla_ZS)(X,Y)=ZS(X,Y)-S(\nabla_ZX,Y)-S(X,\nabla_ZY),\nonumber
\end{equation}
for all $X,Y,Z\in TM$. Then using $S(X,Y)=g(QX,Y)$ and $(3.1), (3.2)$ in the foregoing equation we can easily deduce
\begin{eqnarray}
  (\nabla_ZS)(X,\xi) &=& -S(X,Z)-2ng(X,Z) \\
  (\nabla_\xi S)(Z,X) &=& -2S(X,Z)-4ng(X,Z).
\end{eqnarray}
Making use of $(3.9)$ and $(3.10)$ in the equation $(3.8)$ we obtain
\begin{equation}
g((\mathcal{L}_{V}\nabla)(X,\xi),Z)=2S(X,Z)+4ng(X,Z),\nonumber
\end{equation}
which essentially implies
\begin{equation}
(\mathcal{L}_{V}\nabla)(X,\xi)=2QX+4nX.
\end{equation}
Differentiating $(3.11)$ along the direction of an arbitrary vector field $Z$ and using $(2.5)$ we arrive at
\begin{equation}
(\nabla_Z\mathcal{L}_{V}\nabla)(X,\xi)=-(\mathcal{L}_{V}\nabla)(X,Z)+2\eta(Z)[QX+2nX]+2(\nabla_ZQ)X.
\end{equation}
Now, from Yano \cite{Ya} we recall the following relation
\begin{equation}
(\mathcal{L}_{V}R)(X,Y)Z=(\nabla_X\mathcal{L}_{V}\nabla)(Y,Z)-(\nabla_Y\mathcal{L}_{V}\nabla)(X,Z).
\end{equation}
Setting $Z=\xi$ in the foregoing equation and then making use of $(3.12)$ and using the symmetry of $(\mathcal{L}_{V}\nabla)$ we get
\begin{eqnarray}
(\mathcal{L}_{V}R)(X,Y)\xi &=& 2[(\nabla_XQ)Y-(\nabla_YQ)X+\eta(X)QY-\eta(Y)QX]\nonumber\\
&&+4n[\eta(X)Y-\eta(Y)X].
\end{eqnarray}
Taking $Y=\xi$ in the preceding equation and recalling $(3.1), (3.2)$ we obtain
\begin{equation}
(\mathcal{L}_{V}R)(X,\xi)\xi=0.
\end{equation}
Again setting $Y=\xi$ in $(3.3)$ and using $(2.10)$ we get
\begin{equation}
(\mathcal{L}_{V}g)(X,\xi)=[4n-2\lambda-2\mu+(p+\frac{2}{2n+1})]\eta(X).
\end{equation}
Recalling equation $(2.2)$, i.e.; $g(X,\xi)=\eta(X)$ and taking its Lie derivative along $V$ gives
\begin{equation}
(\mathcal{L}_{V}g)(X,\xi)=(\mathcal{L}_{V}\eta)(X)-g(X,\mathcal{L}_{V}\xi).
\end{equation}
Substituting $(3.16)$ in the previous equation we obtain
\begin{equation}
(\mathcal{L}_{V}\eta)(X)-g(X,\mathcal{L}_{V}\xi)=[4n-2\lambda-2\mu+(p+\frac{2}{2n+1})]\eta(X).
\end{equation}
Taking $X=\xi$ in $(3.18)$ gives
\begin{equation}
\eta(\mathcal{L}_{V}\xi)=[\lambda+\mu-2n-(\frac{p}{2}+\frac{1}{2n+1})].
\end{equation}
Again from $(2.5)$ we can write
\begin{equation}
R(X,\xi)\xi=\eta(X)\xi-X.
\end{equation}
Lie differentiating $(3.20)$ along an an arbitrary vector field $V$ we get
\begin{equation}
\mathcal{L}_{V}R(X,\xi)\xi=(\mathcal{L}_{V}\eta(X))\xi+\eta(X)\mathcal{L}_{V}\xi-\mathcal{L}_{V}X.
\end{equation}
Also from $(2.8)$ we can easily compute that
\begin{equation}
R(X,\xi)\mathcal{L}_{V}\xi=-\eta(\mathcal{L}_{V}\xi)X+g(X,\mathcal{L}_{V}\xi)\xi.
\end{equation}
Now we know that
\begin{equation}
(\mathcal{L}_{V}R)(X,\xi)\xi=\mathcal{L}_{V}R(X,\xi)\xi-R(\mathcal{L}_{V}X,\xi)\xi-R(X,\mathcal{L}_{V}\xi)\xi-R(X,\xi)\mathcal{L}_{V}\xi.
\end{equation}
Using $(2.5)$ and $(3.18)-(3.22)$ in the previous equation we obtain
\begin{equation}
(\mathcal{L}_{V}R)(X,\xi)\xi=[2\lambda+2\mu-4n-(p+\frac{2}{2n+1})][X-\eta(X)\xi].
\end{equation}
Combining the equations $(3.15)$ and $(3.24)$ we get
\begin{equation}
[2\lambda+2\mu-4n-(p+\frac{2}{2n+1})][X-\eta(X)\xi]=0.\nonumber
\end{equation}
Tracing out the previous equation we finally obtain
\begin{equation}
\lambda+\mu=2n+(\frac{p}{2}+\frac{1}{2n+1})
\end{equation}
In view of $(3.25)$ and the sign of lambda we can conclude the required condition. This completes the proof.
\end{proof}
Now we are going establish an important lemma which will be required later in this section.

\begin{lemma}
  If $(g,V,\lambda,\mu)$ is a conformal $\eta$-Ricci soliton on a $(2n+1)$-dimensional Kenmotsu manifold $(M,g,\phi,\xi,\eta)$, then the Ricci tensor satisfies
  \begin{equation}
  (\mathcal{L}_{V}S)(Y,\xi)=(\xi r)\eta(Y)-(Yr).
  \end{equation}
\end{lemma}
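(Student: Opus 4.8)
The plan is to compute the Lie derivative along $V$ of the known contraction $S(Y,\xi) = -2n\,\eta(Y)$ (equation $(2.10)$), and to match it against an independent expression for $(\mathcal{L}_V S)(Y,\xi)$ obtained from the soliton equation. First I would take the Lie derivative of $(2.10)$ with respect to $V$. Using the product rule for $\mathcal{L}_V$ on the tensor product $S(\cdot,\xi)$ we get
\begin{equation}
(\mathcal{L}_V S)(Y,\xi) + S(Y,\mathcal{L}_V\xi) = -2n\,(\mathcal{L}_V\eta)(Y).\nonumber
\end{equation}
Next I would relate $(\mathcal{L}_V\eta)(Y)$ to quantities already derived: from $(3.18)$ we have $(\mathcal{L}_V\eta)(X) = g(X,\mathcal{L}_V\xi) + [4n - 2\lambda - 2\mu + (p + \tfrac{2}{2n+1})]\eta(X)$, and $S(Y,\mathcal{L}_V\xi)$ can be handled since $\mathcal{L}_V\xi$ decomposes into its $\eta$-component (given by $(3.19)$) plus a part annihilated suitably; here $S(Y,\xi)=-2n\eta(Y)$ helps again. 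Substituting, the $\lambda,\mu,p$ terms should cancel using the relation $\lambda+\mu = 2n + (\tfrac{p}{2}+\tfrac{1}{2n+1})$ from $(3.25)$, which is the key algebraic simplification.

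The other half of the argument is to bring in the scalar curvature $r$. I would use the contracted second Bianchi identity, $\func{div} Q = \tfrac{1}{2}\,Dr$, equivalently $\sum_i (\nabla_{e_i}S)(X,e_i) = \tfrac{1}{2}(Xr)$, together with the soliton equation $(3.3)$ to express $\mathcal{L}_V g$ and hence, after differentiating, $\mathcal{L}_V\nabla$ and then the divergence-type contraction that produces $(\mathcal{L}_V S)$. Concretely, tracing the soliton equation $(3.3)$ gives $\func{div} V$ in terms of $r$, $\lambda$, $\mu$, $p$; and the general identity $(\mathcal{L}_V S)(Y,\xi)$ can be expanded via $\mathcal{L}_V(S(Y,\xi)) = (\mathcal{L}_V S)(Y,\xi) + S((\mathcal{L}_V)(\cdot)\ldots)$ — but the cleaner route is: differentiate $(3.3)$ covariantly, contract, and use Lemma 3.1 ((\nabla_XQ)\xi = -QX - 2nX and (\nabla_\xi Q)X = -2QX-4nX) to isolate the $\xi$-direction. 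The terms $(\xi r)\eta(Y)$ and $-(Yr)$ emerge precisely from the $\tfrac12 Dr$ contributions in the two contraction orders.

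I expect the main obstacle to be bookkeeping of the $\mathcal{L}_V\xi$ and $\mathcal{L}_V\eta$ terms: one must carefully use $(3.16)$–$(3.19)$ to ensure that every term involving the (unknown) vector field $\mathcal{L}_V\xi$ and the soliton constants $\lambda,\mu,p$ cancels, leaving only the scalar-curvature terms. The verification that the constant combination $4n - 2\lambda - 2\mu + (p+\tfrac{2}{2n+1})$ vanishes — which it does by $(3.25)$ — is what makes the right-hand side collapse to the clean form $(\xi r)\eta(Y) - (Yr)$. Once that cancellation is in place, the remaining identity is just the contracted Bianchi identity applied in the $\xi$-direction, and the result follows.
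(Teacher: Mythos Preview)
Your plan has a real gap. The ``first half'' --- Lie-differentiating $S(Y,\xi)=-2n\,\eta(Y)$ --- gives
\[
(\mathcal{L}_V S)(Y,\xi)=-2n\,(\mathcal{L}_V\eta)(Y)-S(Y,\mathcal{L}_V\xi),
\]
and after invoking $(3.25)$ you only kill the \emph{constant} in $(3.16)$/$(3.18)$; what remains is $(\mathcal{L}_V S)(Y,\xi)=-2n\,g(Y,\mathcal{L}_V\xi)-S(Y,\mathcal{L}_V\xi)$, which still carries the unknown vector $\mathcal{L}_V\xi$ (orthogonal to $\xi$ by $(3.19)$ but otherwise uncontrolled). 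So the cancellation you attribute to $(3.25)$ does not produce $(\xi r)\eta(Y)-(Yr)$. In fact the paper's proof does not use $(3.25)$ at all, and your first half plays no role.

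What actually works is your ``second half,'' but it needs to be made precise and it is self-contained. The paper starts from the curvature-level identity $(3.14)$ for $(\mathcal{L}_V R)(X,Y)\xi$ (already derived in the proof of Theorem~3.2 from $(3.3)$ via the Yano commutation formula and Lemma~3.1). Taking the inner product with $Z$ and contracting over $X=Z=e_i$ in an orthonormal frame, the identities $\operatorname{div}Q=\tfrac12 Dr$ and $\operatorname{tr}\nabla Q=Dr$ turn the right-hand side into $-2[r+2n(2n+1)]\eta(Y)-(Yr)$, giving $(3.28)$. The second, independent input is $(3.15)$, $(\mathcal{L}_V R)(X,\xi)\xi=0$: its contraction yields $(\mathcal{L}_V S)(\xi,\xi)=0$, so setting $Y=\xi$ in $(3.28)$ forces $(\xi r)=-2[r+2n(2n+1)]$, and substituting back gives the stated formula. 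The two pieces you are missing are (i) that the contraction is performed on the $(\mathcal{L}_V R)$ formula $(3.14)$, not directly on the soliton equation, and (ii) the separate use of $(3.15)$ to identify the coefficient as $(\xi r)$.
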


\begin{proof}
Recalling the equation $(3.14)$ and taking inner product with respect to an arbitrary vector field $Z$ we can write
\begin{eqnarray}
g((\mathcal{L}_{V}R)(X,Y)\xi,Z) &=& 2[g((\nabla_XQ)Y,Z)-g((\nabla_YQ)X,Z)]\nonumber\\
&&+2[\eta(X)S(Y,Z)-\eta(Y)S(X,Z)]\nonumber\\
&&+4n[\eta(X)g(Y,Z)-\eta(Y)g(X,Z)].
\end{eqnarray}
Let us consider a local orthonormal basis $\{e_i: 1\leq i\leq (2n+1)\}$ of the manifold $M$. Setting $X=Z=e_i$ in $(3.27)$ and summing over the basis for $1\leq i\leq (2n+1)$ and then making use of the well-known formulas div$Q=\frac{1}{2}$grad$r$ and trace$\nabla Q=$grad$r$ we obtain
\begin{equation}
  (\mathcal{L}_{V}S)(Y,\xi)=-2[r+2n(2n+1)]\eta(Y)-(Yr).
  \end{equation}
  Again contracting $(3.15)$ we can easily get
  \begin{equation}
(\mathcal{L}_{V}S)(\xi,\xi)=0.
  \end{equation}
  Taking $Y=\xi$ in $(3.28)$ and making use of $(3.29)$ yields
  \begin{equation}
(\xi r)=-2n[r+2n(2n+1)].
  \end{equation}
 Hence in view of $(3.30)$ and $(3.28)$ leads to $(3.26)$ and this completes the proof.
\end{proof}
Next we will focus on $\eta$-Einstein Kenmotsu manifolds admitting conformal $\eta$-Ricci soliton and try to characterize them. Before that we need the following lemma from Venkatesha et. al., (for proof see \cite{Venk})

\begin{lemma}
  The scalar curvature $r$ of an $\eta$-Einstein Kenmotsu manifold satisfies the relation
  \begin{equation}
  Dr=(\xi r)\xi.
  \end{equation}
\end{lemma}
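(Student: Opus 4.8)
The plan is to show that the gradient of the scalar curvature $r$ has no component orthogonal to the Reeb vector field $\xi$, which is exactly the assertion $Dr=(\xi r)\xi$. The natural starting point is the defining relation of an $\eta$-Einstein Kenmotsu manifold, namely $S(X,Y)=\alpha g(X,Y)+\beta\eta(X)\eta(Y)$ for smooth functions $\alpha,\beta$. First I would take the trace of this identity over a local orthonormal frame $\{e_i\}$: using $\sum_i g(e_i,e_i)=2n+1$ and $\sum_i\eta(e_i)^2=g(\xi,\xi)=1$, this yields $r=(2n+1)\alpha+\beta$, so $\alpha$ and $\beta$ are not independent once $r$ is known. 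Next I would contract the $\eta$-Einstein relation with $\xi$ in the second slot and invoke $(2.10)$, i.e.\ $S(X,\xi)=-2n\eta(X)$, together with $\eta(\xi)=1$; this forces $\alpha+\beta=-2n$. Combining the two relations gives $\alpha=\frac{r}{2n}+1$ and $\beta=-\frac{r}{2n}-(2n+1)$, so both $\alpha$ and $\beta$ are explicit affine functions of $r$ alone.

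The second ingredient is the contracted second Bianchi identity $\operatorname{div}Q=\tfrac12\operatorname{grad}r$, equivalently $\sum_i(\nabla_{e_i}Q)(e_i,X)=\tfrac12 (Xr)$ written in terms of the Ricci operator. I would differentiate the Ricci operator form of the $\eta$-Einstein condition, $QX=\alpha X+\beta\eta(X)\xi$, covariantly along an arbitrary $Y$, using $\nabla_Y\xi=Y-\eta(Y)\xi$ from $(2.5)$ and $(\nabla_Y\eta)X=g(X,Y)-\eta(X)\eta(Y)$ from $(2.6)$. This produces
\[
(\nabla_YQ)X=(Y\alpha)X+(Y\beta)\eta(X)\xi+\beta\big[g(X,Y)\xi-2\eta(X)\eta(Y)\xi+\eta(X)Y\big].
\]
Then I would trace this over $Y=X=e_i$ to compute $\operatorname{div}Q$, and separately use the known $\operatorname{trace}\nabla Q=\operatorname{grad}r$; the key point is that after substituting $\alpha,\beta$ as functions of $r$ (so that $Y\alpha$ and $Y\beta$ are scalar multiples of $Yr$), the resulting identity, when evaluated against a vector $X$ orthogonal to $\xi$, collapses to a relation of the form $(\text{const})\,(Xr)=(\text{const})\,(Xr)\eta(\xi)$-type, from which $Xr=0$ for all $X\perp\xi$ follows. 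Equivalently, and perhaps more cleanly, I would contract the displayed formula to get an expression for $\operatorname{grad}r$ purely in terms of $(\xi\alpha)\xi$, $(\xi\beta)\xi$, $\operatorname{grad}\alpha$, $\operatorname{grad}\beta$ and $\xi$, and then use $\operatorname{grad}\alpha=\frac{1}{2n}\operatorname{grad}r$, $\operatorname{grad}\beta=-\frac{1}{2n}\operatorname{grad}r$ to solve algebraically for $\operatorname{grad}r$; the non-$\xi$ parts must cancel, leaving $Dr=(\xi r)\xi$.

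The main obstacle I anticipate is purely computational bookkeeping: correctly tracing the covariant derivative of $QX=\alpha X+\beta\,\eta\otimes\eta\,(X)$ and keeping track of all the $\beta$-terms coming from $\nabla\xi$ and $\nabla\eta$, then matching them against \emph{two} different contracted-Bianchi identities ($\operatorname{div}Q=\tfrac12\operatorname{grad}r$ and $\operatorname{trace}\nabla Q=\operatorname{grad}r$) without sign errors. No conceptual difficulty should arise: the statement is essentially the fact that on a Kenmotsu manifold an $\eta$-Einstein structure has its "Einstein defect" concentrated along $\xi$, and since the functions $\alpha,\beta$ are determined by $r$, the variation of $r$ transverse to $\xi$ is obstructed by the Bianchi identity. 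I would present the computation compactly, cross-referencing $(2.5)$, $(2.6)$, $(2.10)$ and the definition of $\eta$-Einstein, and state the conclusion $Dr=(\xi r)\xi$.
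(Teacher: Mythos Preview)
The paper does not actually supply its own proof of this lemma: it is quoted verbatim from Venkatesha--Naik--Kumara \cite{Venk} with the remark ``for proof see \cite{Venk}''. So there is no in-paper argument to compare against, and your plan is not competing with anything written here.

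That said, your approach is exactly the standard one and is essentially how the cited reference proceeds: determine $\alpha=\tfrac{r}{2n}+1$, $\beta=-\tfrac{r}{2n}-(2n+1)$ from the trace and from $S(\cdot,\xi)=-2n\,\eta$, compute $(\nabla_YQ)X$ using $(2.5)$--$(2.6)$, contract with $Y=X=e_i$, and feed the result into $\operatorname{div}Q=\tfrac12\operatorname{grad}r$. Two small remarks. First, your second ``identity'' $\operatorname{trace}\nabla Q=\operatorname{grad}r$ is just the differentiated trace relation $r=(2n+1)\alpha+\beta$ and carries no new information; only the contracted Bianchi identity does real work, so you can drop the redundancy. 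Second, when you evaluate on $X\perp\xi$ the Bianchi computation collapses to $\bigl(\tfrac12-\tfrac{1}{2n}\bigr)(Xr)=0$, i.e.\ $\tfrac{n-1}{2n}(Xr)=0$; this gives $Xr=0$ only for $n\ge 2$. In dimension three the argument as you have written it is vacuous, so either state the hypothesis $n\ge 2$ explicitly (which is what the cited source effectively does and is all the present paper needs) or supply a separate $3$-dimensional argument.
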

Now we are going to prove our main result of this theorem which completely characterizes the nature of the conformal $\eta$-Ricci soliton on $\eta$-Einstein Kenmotsu manifold.
 \begin{theorem}
  Let $(M,g,\phi,\xi,\eta)$ be a $(2n+1)$-dimensional $\eta$-Einstein Kenmotsu manifold. If $(g,V,\lambda,\mu)$ is a conformal $\eta$-Ricci soliton on $M$, then $M$ is an Einstein manifold.
 \end{theorem}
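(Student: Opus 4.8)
The plan is to start from the $\eta$-Einstein condition $S(X,Y)=\alpha g(X,Y)+\beta\eta(X)\eta(Y)$ and show that $\beta$ must vanish identically. First I would pin down $\alpha$ and $\beta$ in terms of the scalar curvature $r$: contracting the $\eta$-Einstein relation over an orthonormal basis gives $r=(2n+1)\alpha+\beta$, while feeding $Y=\xi$ into it and comparing with $(2.10)$, namely $S(X,\xi)=-2n\eta(X)$, yields $\alpha+\beta=-2n$. Hence $\alpha=\frac{r}{2n}+1$ and $\beta=-\big(\frac{r}{2n}+2n+1\big)$, so everything reduces to showing $r$ is the constant $-2n(2n+1)$, which forces $\beta=0$ and gives the Einstein conclusion.

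Next I would exploit Lemma 3.6, $Dr=(\xi r)\xi$, together with the scalar-curvature relation $(3.30)$, $\xi r=-2n[r+2n(2n+1)]$, obtained in Lemma 3.4. The key is to produce a second, independent equation relating $\xi r$ (equivalently $Dr$) to $r$. For this I would differentiate the $\eta$-Einstein expression for $Q$, i.e. $QX=\alpha X+\beta\eta(X)\xi$, covariantly, and plug into Lemma 3.1's identity $(\nabla_\xi Q)X=-2QX-4nX$ (or $(\nabla_XQ)\xi=-QX-2nX$); using $\nabla_X\xi=X-\eta(X)\xi$ from $(2.5)$ and $(\nabla_X\eta)Y=g(X,Y)-\eta(X)\eta(Y)$ from $(2.6)$, this will express $X\alpha$, $X\beta$, $\xi\alpha$, $\xi\beta$ — hence derivatives of $r$ — in closed form. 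Comparing the resulting constraint with $(3.30)$ should force $r+2n(2n+1)=0$, i.e. $\xi r=0$ and then $Dr=0$ by Lemma 3.6, so $r$ is constant with the stated value.

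The main obstacle I anticipate is organizing the differentiation of $Q=\alpha\,\mathrm{Id}+\beta\,\eta\otimes\xi$ cleanly: one must be careful that $\alpha,\beta$ are a priori only smooth functions, so $(\nabla_XQ)Y=(X\alpha)Y+(X\beta)\eta(Y)\xi+\beta\big((\nabla_X\eta)Y\big)\xi+\beta\eta(Y)\nabla_X\xi$, and then contract against the Lemma 3.1 identities in the right slots. A secondary subtlety is that I should double-check whether the soliton hypothesis $(g,V,\lambda,\mu)$ is genuinely needed here or whether it enters only through Lemma 3.4 (which supplied $(3.30)$); in the write-up I would invoke Lemma 3.4 so that the soliton assumption is used exactly once, keeping the argument transparent. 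Once $r$ is shown constant, substituting back gives $\beta=0$, hence $S=\alpha g$ with $\alpha$ constant, and $M$ is Einstein, completing the proof. \qed
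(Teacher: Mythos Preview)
There is a genuine gap: the ``second, independent equation'' you hope to extract is in fact identical to the first, so no comparison argument can force $r+2n(2n+1)=0$. Concretely, once you differentiate $Q=\alpha\,\mathrm{Id}+\beta\,\eta\otimes\xi$ and feed it into Lemma~3.1's identity $(\nabla_\xi Q)X=-2QX-4nX$, the $\eta$- and $\xi$-terms drop (since $\nabla_\xi\xi=0$ and $(\nabla_\xi\eta)=0$) and you obtain $\xi\alpha=-2\alpha-4n$, i.e.\ $\xi r=-2[r+2n(2n+1)]$. But this is exactly relation $(3.30)$ (modulo a typo in the paper's displayed coefficient): indeed $(3.30)$ is nothing more than the trace of $(\nabla_\xi Q)X=-2QX-4nX$, and it holds on \emph{every} Kenmotsu manifold, with or without a soliton. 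So your two constraints coincide, and the scheme collapses. The companion identity $(\nabla_XQ)\xi=-QX-2nX$ is no help either: plugging in the $\eta$-Einstein form only recovers the tautology $\alpha+\beta=-2n$.

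The deeper issue is that your outline never uses the conformal $\eta$-Ricci soliton equation in a nontrivial way; you invoke Lemma~3.4 only for $(3.30)$, which, as just noted, is a general Kenmotsu identity. Since $\eta$-Einstein Kenmotsu manifolds need not be Einstein in general, any valid proof must exploit the soliton data $V$ more substantially. The paper does this by working with Lie derivatives along $V$: it combines Lemma~3.4 and Lemma~3.6 to get $(\mathcal{L}_{V}S)(Y,\xi)=0$, uses the relation $\lambda+\mu=2n+(\tfrac{p}{2}+\tfrac{1}{2n+1})$ from Theorem~3.2 to deduce $(\mathcal{L}_{V}g)(X,\xi)=0$, and from these obtains the dichotomy $\bigl(2n+1+\tfrac{r}{2n}\bigr)\mathcal{L}_{V}\xi=0$; a short case analysis (the case $\mathcal{L}_{V}\xi=0$ being ruled out via Yano's formula for $\mathcal{L}_{V}\nabla$) then forces $2n+1+\tfrac{r}{2n}=0$ and hence the Einstein conclusion. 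You will need an ingredient of this type---something that genuinely involves $V$---to close the argument.
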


\begin{proof}
 Recalling $(3.31)$ and using it in $(3.26)$ yields
 \begin{equation}
 (\mathcal{L}_{V}S)(Y,\xi)=0.
  \end{equation}
 From $(2.10)$ we know that in a Kenmotsu manifold $S(X,\xi)=-2n\eta(X)$. Taking Lie derivative of this relation along an arbitrary vector field $Y$ and making use of $(3.32)$ we obtain
  \begin{equation}
 S(\mathcal{L}_{V}X,\xi)+S(X,\mathcal{L}_{V}\xi)=-2n[(\mathcal{L}_{V}\eta)(X)+\eta(\mathcal{L}_{V}X)].
  \end{equation}
  Again using $(3.25)$ in $(3.16)$ we get
  \begin{equation}
 (\mathcal{L}_{V}g)(X,\xi)=0.
  \end{equation}
  Now since the underlying manifold is an $\eta$-Einstein manifold it satisfies $(2.11)$ and combining it with $(2.10)$ gives $\alpha +\beta=-2n$. Contracting $(2.11)$ and in view of the foregoing relation we obtain
  \begin{equation}
 S(X,Y)=(1+\frac{r}{2n})g(X,Y)-(2n+1+\frac{r}{2n})\eta(X)\eta(Y).
  \end{equation}
  Making use of equations $(3.34)$ and $(3.35)$ in the equation $(3.33)$ we arrive at
  \begin{equation}
 (2n+1+\frac{r}{2n})\mathcal{L}_{V}\xi=0.
  \end{equation}
  This essentially implies that either $(2n+1+\frac{r}{2n})=0$ or $\mathcal{L}_{V}\xi=0$. Hence we have the following two cases.
  \par
  \medskip
  \textbf{Case-I:} If possible let $\mathcal{L}_{V}\xi=0$. Then it follows from $(3.36)$ that, $(2n+1+\frac{r}{2n})\neq0$ in some open set $\mathfrak{D}$ of the manifold $M$. Therefore using $\mathcal{L}_{V}\xi=0$ and $(2.5)$ we obtain
  \begin{equation}
 \nabla_\xi V=V-\eta(V)\xi.
  \end{equation}
 In view of $(3.37)$, the equation $(3.34)$ implies that
 \begin{equation}
 g(\nabla_XV,\xi)=-g(\nabla_\xi V,X)=-g(X,V)+\eta(X)\eta(V).
  \end{equation}
  We now recall the formula from Yano \cite{Ya}
  \begin{equation}
 (\mathcal{L}_{V}\nabla)(X,Y)=\nabla_X\nabla_Y-\nabla_{\nabla_XY}V+R(V,X)Y.
  \end{equation}
  Substituting $Y=\xi$ in the previous equation and making use of $(2.5)$, $(2.7)$, $(3.37)$ and $(3.38)$ we get $(\xi r)=0$. Thus from equation $(3.30)$ implies $(2n+1+\frac{r}{2n})=0$. This leads to a contradiction to the fact that $(2n+1+\frac{r}{2n})\neq0$ in some open set $\mathfrak{D}$ of the manifold $M$.
   \par
  \medskip
  \textbf{Case-II:} Let $(2n+1+\frac{r}{2n})=0$. Then by virtue of $(3.35)$ it immediately follows $S=(\frac{r}{2n}+1)g$ and hence the manifold $M$ is Einstein. This completes the proof.
\end{proof}

\medskip
\medskip
\section{Gradient conformal $\eta$-Ricci solitons on Kenmotsu manifolds}
$~~~$This section is devoted to the study of Kenmotsu manifolds admitting gradient conformal $\eta$-Ricci solitons and we try to characterize the potential vector field of the soliton. First, we prove the following important result.
\begin{lemma}
  If $(g,V,\lambda,\mu)$ is a gradient conformal $\eta$-Ricci soliton on a $(2n+1)$-dimensional Kenmotsu manifold $(M,g,\phi,\xi,\eta)$, then the Riemannian curvature tensor $R$ satisfies
  \begin{equation}
 R(X,Y)Df=[(\nabla_YQ)X-(\nabla_XQ)Y]+\mu[\eta(X)Y-\eta(Y)X].
  \end{equation}
\end{lemma}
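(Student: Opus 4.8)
The plan is to translate the gradient soliton equation $(1.5)$ into a formula for $\nabla_X Df$ and then substitute it into the definition of the curvature operator. Since for a $(2n+1)$-dimensional manifold the dimension $n$ occurring in $(1.5)$ equals $2n+1$, and since $\mathrm{Hess}\,f(X,Y) = g(\nabla_X Df,Y)$ and $S(X,Y)=g(QX,Y)$, writing $c := \lambda - \big(\tfrac{p}{2}+\tfrac{1}{2n+1}\big)$ the equation $(1.5)$ becomes the pointwise operator identity
\[
\nabla_X Df = -QX - cX - \mu\,\eta(X)\xi
\]
for every vector field $X$ on $M$.

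I would then set $PX := \nabla_X Df$, a $(1,1)$-tensor field, and use the elementary identity (valid for the gradient of any smooth function, because $\nabla$ is torsion-free)
\[
R(X,Y)Df = (\nabla_X P)Y - (\nabla_Y P)X .
\]
Differentiating the formula for $P$, the $\nabla_X Y$-terms coming from $-QY - cY$ cancel against the connection term in $\nabla_X P$, the constant $c$ contributes nothing to $\nabla_X P$, and one is left with $(\nabla_X P)Y = -(\nabla_X Q)Y - \mu(\nabla_X\eta)(Y)\,\xi - \mu\,\eta(Y)\nabla_X\xi$. Now I would insert the Kenmotsu identities $(2.5)$, namely $\nabla_X\xi = X - \eta(X)\xi$, and $(2.6)$, namely $(\nabla_X\eta)Y = g(X,Y) - \eta(X)\eta(Y)$, to obtain
\[
(\nabla_X P)Y = -(\nabla_X Q)Y - \mu\big(g(X,Y) - 2\eta(X)\eta(Y)\big)\xi - \mu\,\eta(Y)X .
\]

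Finally, antisymmetrizing in $X$ and $Y$: the middle term is symmetric in $X,Y$ and drops out, and what remains is exactly $R(X,Y)Df = (\nabla_Y Q)X - (\nabla_X Q)Y + \mu[\eta(X)Y - \eta(Y)X]$, which is $(4.1)$. The only slightly delicate point is the bookkeeping for the covariant derivative of the term $\mu\,\eta\otimes\xi$: one must notice that $\nabla_X\big(\mu\eta(Y)\xi\big) - \mu\,\eta(\nabla_X Y)\xi = \mu(\nabla_X\eta)(Y)\xi + \mu\,\eta(Y)\nabla_X\xi$ and that, once $(2.6)$ is substituted, the first summand is symmetric in $X,Y$ and therefore disappears under antisymmetrization; the rest of the computation is routine, and no real obstacle arises since every structural identity needed is already available for Kenmotsu manifolds.
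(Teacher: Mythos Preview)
Your proposal is correct and follows essentially the same route as the paper: both rewrite the gradient soliton equation as $\nabla_X Df = -QX - cX - \mu\,\eta(X)\xi$ and then feed this into the curvature formula, using the Kenmotsu identities $(2.5)$ and $(2.6)$ along the way. The only cosmetic difference is that the paper computes $\nabla_X\nabla_Y Df$, $\nabla_Y\nabla_X Df$, and $\nabla_{[X,Y]}Df$ separately (equations $(4.4)$--$(4.6)$), whereas you package the same calculation tensorially via $R(X,Y)Df = (\nabla_X P)Y - (\nabla_Y P)X$; this absorbs the $\nabla_{[X,Y]}$ term automatically and makes the symmetry cancellations more transparent, but the underlying computation is identical.
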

\begin{proof}
  Since the data $(g,V,\lambda,\mu)$ is a gradient conformal $\eta$-Ricci soliton on the $(2n+1)$-dimensional Kenmotsu manifold, from the defining equation $(1.1)$ we get
  \begin{equation}
Hessf(X,Y)+S(X,Y)+[\lambda -(\frac{p}{2}+\frac{1}{n})]g(X,Y)+\mu \eta(X)\eta(Y)=0,
\end{equation}
  for all vector fields $X$ and $Y$ on $M$.\\
  Now the foregoing equation can be rewritten as
  \begin{equation}
\nabla_XDf=-QX-[\lambda -(\frac{p}{2}+\frac{1}{2n+1})]X-\mu \eta(X)\xi.
\end{equation}
Covariantly diffrentiating the previous equation along an arbitrary vector field $Y$ and using $(2.5)$ we obtain
 \begin{eqnarray}
\nabla_Y\nabla_XDf &=& -\nabla_Y(QX)-[\lambda -(\frac{p}{2}+\frac{1}{2n+1})]\nabla_YX\nonumber\\
&&-\mu[(\nabla_Y\eta(X))\xi+(Y-\eta(Y)\xi)\eta(X)].
\end{eqnarray}
Interchanging $X$ and $Y$ in $(4.4)$ gives
\begin{eqnarray}
\nabla_X\nabla_YDf &=& -\nabla_X(QY)-[\lambda -(\frac{p}{2}+\frac{1}{2n+1})]\nabla_XY\nonumber\\
&&-\mu[(\nabla_X\eta(Y))\xi+(X-\eta(X)\xi)\eta(Y)].
\end{eqnarray}
Again in view of $(4.3)$ we can write
\begin{eqnarray}
\nabla_{[X,Y]}Df &=& -Q(\nabla_XY-\nabla_YX)-\mu\eta(\nabla_XY-\nabla_YX)\xi\nonumber\\
&&-[\lambda -(\frac{p}{2}+\frac{1}{2n+1})](\nabla_XY-\nabla_YX).
\end{eqnarray}
Therefore substituting the values from $(4.4)$, $(4.5)$ and $(4.4)$ in the following well-known Riemannian curvature formula
\begin{equation}
R(X,Y)Z=\nabla_X\nabla_YZ-\nabla_Y\nabla_XZ-\nabla_{[X,Y]}Z,\nonumber
\end{equation}
we obtain our required expression $(4.1)$. This completes the proof.
\end{proof}
Now we proceed to prove our main result of this section.

\begin{theorem}
  Let $(g,V,\lambda,\mu)$ be a gradient conformal $\eta$-Ricci soliton on a $(2n+1)$-dimensional Kenmotsu manifold $(M,g,\phi,\xi,\eta)$. Then the potential vector field $V$ is pointwise collinear with the characteristic vector field $\xi$.
\end{theorem}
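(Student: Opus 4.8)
The plan is to exploit Lemma 4.1 together with the contraction identities for the Ricci operator on a Kenmotsu manifold (Lemma 3.1) to force the gradient $Df$ to have a very rigid structure, and then to show that its component orthogonal to $\xi$ must vanish. First I would contract the curvature identity $(4.1)$ over $X$ with a local orthonormal frame $\{e_i\}$: since $\sum_i g(R(e_i,Y)Df,e_i) = -S(Y,Df)$ and $\sum_i g((\nabla_{e_i}Q)Y - (\nabla_Y Q)e_i, e_i)$ collapses via $\operatorname{div}Q = \tfrac12\operatorname{grad}r$ and $\operatorname{trace}\nabla Q = \operatorname{grad}r$, the contraction of $(4.1)$ yields a relation of the form $S(Y,Df) = \tfrac12(Yr) + 2n\mu\,\eta(Y) - \mu\,(Y r\text{-type terms})$; more precisely it should produce an expression for $S(Y,Df)$ (equivalently $g(QDf,Y)$) purely in terms of $dr$, $\eta$, and $\mu$.

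Next I would specialise Lemma 4.1 by setting $Y=\xi$. Using $(2.7)$, namely $R(X,\xi)\xi$-type reductions, together with the two identities of Lemma 3.1, $(\nabla_X Q)\xi = -QX - 2nX$ and $(\nabla_\xi Q)X = -2QX - 4nX$, the right-hand side of $(4.1)$ with $Y=\xi$ simplifies dramatically: the bracket $(\nabla_\xi Q)X - (\nabla_X Q)\xi$ becomes $(-2QX-4nX) - (-QX-2nX) = -QX - 2nX$, and the $\mu$-term becomes $\mu[\eta(X)\xi - X]$. Hence $R(X,\xi)Df = -QX - 2nX + \mu\eta(X)\xi - \mu X$. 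Then I would take the inner product of this with $\xi$ and use $\eta(R(X,\xi)Df) = g(X,Df)\eta(\xi) - g(\xi,Df)\eta(X)$ from $(2.9)$, together with $S(X,\xi)=-2n\eta(X)$ from $(2.10)$, to extract a scalar relation linking $Xf = g(X,Df)$, $(\xi f)\eta(X)$ and $\mu\,\eta(X)$. Comparing this with the result of the contraction step should pin down $Df$.

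The cleanest route to the conclusion, I expect, is: from $(4.3)$ we have $\nabla_X Df = -QX - [\lambda-(\tfrac p2+\tfrac1{2n+1})]X - \mu\eta(X)\xi$; taking $X=\xi$ and using $Q\xi = -2n\xi$ gives $\nabla_\xi Df = [2n - \lambda + (\tfrac p2+\tfrac1{2n+1}) - \mu]\xi$, so $\nabla_\xi Df$ is collinear with $\xi$. Then I would compute $g(\nabla_X Df,\xi)$ two ways — directly from $(4.3)$, which gives $-S(X,\xi) - [\lambda-(\tfrac p2+\tfrac1{2n+1})]\eta(X) - \mu\eta(X) = [2n - \lambda + (\tfrac p2+\tfrac1{2n+1}) - \mu]\eta(X)$, and via $X(g(Df,\xi)) - g(Df,\nabla_X\xi) = X(\xi f) - (Xf) + (\xi f)\eta(X)$ using $(2.5)$. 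Equating the two yields an expression for $X(\xi f)$, i.e. $d(\xi f) = (\text{function})\,\eta$, which says $\operatorname{grad}(\xi f)$ is pointwise collinear with $\xi$; combined with the curvature identity $(4.1)$ applied with $X,Y \perp \xi$ (where the $\mu$-term drops out), one forces the $\phi$-part of $Df$ to vanish, giving $Df = (\xi f)\xi$, i.e. $V=Df$ is pointwise collinear with $\xi$.

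The main obstacle I anticipate is the bookkeeping in the contraction step: one must correctly handle $\operatorname{div}Q$ and $\operatorname{trace}\nabla Q$ and make sure the $\mu$-dependent terms and the $dr$-terms are combined without sign errors, since the whole argument hinges on the resulting scalar identity being strong enough to kill the component of $Df$ orthogonal to $\xi$. A secondary subtlety is ensuring the argument is valid pointwise even where $Df$ may vanish — but there the collinearity statement is trivially true, so it suffices to argue on the open set where $Df\neq0$.
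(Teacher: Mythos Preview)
Your core step is correct and is essentially the paper's argument, but you have buried it under machinery you do not need. Once you set $Y=\xi$ in Lemma~4.1 and take the inner product with $\xi$, the $\mu$-terms cancel on the spot: $g(\mu\eta(X)\xi-\mu X,\xi)=\mu\eta(X)-\mu\eta(X)=0$, and $g(-QX-2nX,\xi)=-S(X,\xi)-2n\eta(X)=2n\eta(X)-2n\eta(X)=0$ by $(2.10)$. So the right-hand side is identically zero, while by $(2.9)$ the left-hand side is $(Xf)-(\xi f)\eta(X)$. That is already $Df=(\xi f)\xi$, and the proof is finished. The contraction over an orthonormal frame, the $\operatorname{div}Q$/$\operatorname{trace}\nabla Q$ bookkeeping, and the ``cleanest route'' via computing $g(\nabla_X Df,\xi)$ two ways are all superfluous.

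The paper does the same thing in a marginally slicker order: it takes the inner product of $(4.1)$ with $\xi$ for \emph{general} $X,Y$, observes that the $\mu$-term $\mu[\eta(X)\eta(Y)-\eta(Y)\eta(X)]$ vanishes by symmetry and that $g((\nabla_YQ)X-(\nabla_XQ)Y,\xi)$ vanishes because $g((\nabla_YQ)X,\xi)=g(X,(\nabla_YQ)\xi)=-S(X,Y)-2ng(X,Y)$ is symmetric in $X,Y$ by $(3.1)$. Hence $g(R(X,Y)Df,\xi)=0$; combining this with $g(R(X,Y)Df,\xi)=(Xf)\eta(Y)-(Yf)\eta(X)$ from $(2.7)$ and setting $Y=\xi$ gives the result. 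Your route and the paper's differ only in whether one specialises $Y=\xi$ before or after pairing with $\xi$; neither requires the extra contraction or Hessian computations you outline.
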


\begin{proof}
  Let us first recall the equation $(2.7)$ and taking its inner product with $Df$ yields
  \begin{equation}
  g(R(X,Y)\xi,Df)=(Yf)\eta(X)-(Xf)\eta(Y).\nonumber
  \end{equation}
  Again we know that $g(R(X,Y)\xi,Df)=-g(R(X,Y)Df,\xi)$ and in view of this the previous equation becomes
   \begin{equation}
 g(R(X,Y)Df,\xi)=(Xf)\eta(Y)-(Yf)\eta(X).
  \end{equation}
  Now taking inner product og $(4.1)$ with $\xi$ and using $(3.1)$ we obtain
  \begin{equation}
 g(R(X,Y)Df,\xi)=0.
  \end{equation}
  Thus combining $(4.7)$ and $(4.8)$ we arrive at
  \begin{equation}
 (Xf)\eta(Y)=(Yf)\eta(X).\nonumber
  \end{equation}
  Taking $Y=\xi$ in the foregoing equation gives us $(Xf)=(\xi f)\eta(X)$, which essentially implies
  \begin{equation}
 g(X,Df)=g(X,(\xi f)\xi),
  \end{equation}
  for all smooth vector fields $X$ on $M$. Since the equation $(4.9)$ is true for all $X$, we can conclude that $V=Df=(\xi f)\xi$. This completes the proof.
\end{proof}

\medskip
\medskip
\section{conformal $\eta$-Ricci solitons on Kenmotsu manifolds under generalized $D$-conformal deformations}
$~~~$Given an almost contact metric structure $(M,g,\phi,\xi,\eta)$, the generalized $D$-conformal deformation \cite{AleCar} on $M$ is given by
\begin{equation}
\phi^*=\phi,~~~\xi^*=\frac{1}{a}\xi,~~~\eta^*=a\eta,~~~g^*=bg+(a^2-b)\eta\otimes\eta,
\end{equation}
where $a$ and $b$ are two positive functions on $M$. Then it is easy to see that $(M,g^*,\phi^*,\xi^*,\eta^*)$ is also an almost contact metric manifold \cite{AleCar}. We notice that, the transformation $(5.1)$ reduces to
\begin{enumerate}
  \item $D$-homothetic deformation \cite{STan} for $a=b=constant$ and
  \item conformal deformation \cite{RSAG} for $a^2=b$.
\end{enumerate}
\par
\medskip
According to \cite{RSAG}, If $(M,g^*,\phi^*,\xi^*,\eta^*)$ is an almost contact metric manifold obtained by the generalized $D$-conformal deformation $(5.1)$, of a Kenmotsu manifold $(M,g,\phi,\xi,\eta)$, then $(M,g^*,\phi^*,\xi^*,\eta^*)$ is also a Kenmotsu manifold and $a$ and $b$ are constant along the direction of the characteristic vector field $\xi$. In what follows, the quantities in $(M,g^*,\phi^*,\xi^*,\eta^*)$ are denoted with $*$ and the quantities in $(M,g,\phi,\xi,\eta)$ are denoted without $*$ respectively.
\par
\medskip
By \cite{Naga}, the Levi-Civita connections $\nabla^*$ and $\nabla$ are related by
\begin{equation}
{\nabla^*}_XY=\nabla_XY+\frac{a^2-b}{a^2}g(\phi X,\phi Y)\xi,
\end{equation}
for all vector fields $X$ and $Y$ on $M$. Making use of $(5.2)$, the Riemannian curvature tensor $R^*$ of $(M,g^*,\phi^*,\xi^*,\eta^*)$ can be calculated as follows
\begin{eqnarray}
  R^*(X,Y)Z &=& R(X,Y)Z+\frac{a^2-b}{a^2}[g(\phi Y,\phi Z)X-g(\phi X,\phi Z)Y] \nonumber\\
   && +g(\phi Y,\phi Z)[\frac{2b}{a^3}(Xa)-\frac{1}{a^2}(Xb)]+g(\phi X,\phi Z)[\frac{2b}{a^3}(Ya)-\frac{1}{a^2}(Yb)],
\end{eqnarray}
for all vector fields $X,Y,Z$ on $M$. On contracting $(5.3)$, obtain the Ricci tensor $S^*$ of the generalized $D$-conformally deformed Kenmotsu manifold as
\begin{equation}
S^*(X,Y)=S(X,Y)+\frac{2n(a^2-b)}{a^2}[g(X,Y)-\eta(X)\eta(Y)].
\end{equation}
\par
\medskip
Now we are in a position to prove our main result of this section, on the invariance of conformal $\eta$-Ricci soliton on Kenmotsu manifolds under generalized $D$-conformal deformation.
\begin{theorem}
  Under generalized $D$-conformal deformation of a Kenmotsu manifold $(M,g,\phi,\xi,\eta)$, an $\eta$-Einstein conformal $\eta$-Ricci soliton remains invariant.
\end{theorem}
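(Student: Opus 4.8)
The phrase ``remains invariant'' should be read as: if the metric $g$ is simultaneously $\eta$-Einstein and a conformal $\eta$-Ricci soliton, then the generalized $D$-conformally deformed metric $g^{*}$ is again $\eta$-Einstein and again carries a conformal $\eta$-Ricci soliton, with the same potential vector field $V$ (possibly up to a scaling) and suitably adjusted constants $\lambda^{*},\mu^{*}$ and conformal pressure $p^{*}$. The plan is to verify the defining equation $(1.4)$ for the Kenmotsu structure $(M,g^{*},\phi^{*},\xi^{*},\eta^{*})$ directly. The key preliminary reduction is that, since the hypotheses here are exactly those of Theorem $3.8$, the metric $g$ is in fact Einstein: $S=-2ng$ and $r=-2n(2n+1)$. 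Substituting this into $(3.3)$ turns the soliton equation into $\mathcal{L}_{V}g=\kappa\, g+\nu\,\eta\otimes\eta$ with the explicit constants $\kappa=4n-2\lambda+(p+\frac{2}{2n+1})$ and $\nu=-2\mu$, and $(3.25)$ forces $\kappa+\nu=0$; in particular $(\mathcal{L}_{V}g)(X,\xi)=0$ by $(3.34)$, while $(3.17)$ and $(3.19)$ give $\mathcal{L}_{V}\eta=\iota_{\mathcal{L}_{V}\xi}g$ with $\eta(\mathcal{L}_{V}\xi)=0$.

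Next I would feed in the deformation data $(5.1)$, $(5.2)$, $(5.4)$. The curvature side is painless: by $(5.4)$ and $S=-2ng$, the tensor $S^{*}$ is a constant-coefficient combination of $g$ and $\eta\otimes\eta$, and rewriting $g=\frac{1}{b}g^{*}-\frac{a^{2}-b}{b}\,\eta\otimes\eta$ together with $\eta\otimes\eta=\frac{1}{a^{2}}\,\eta^{*}\otimes\eta^{*}$ exhibits $S^{*}$ in the form $\alpha^{*}g^{*}+\beta^{*}\,\eta^{*}\otimes\eta^{*}$, which already yields the $\eta$-Einstein conclusion for $g^{*}$ without using the soliton at all. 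For the Lie-derivative side I would expand, using $g^{*}=bg+(a^{2}-b)\eta\otimes\eta$,
\[
\mathcal{L}_{V}g^{*}=(Vb)\,g+b\,\mathcal{L}_{V}g+\big(V(a^{2}-b)\big)\,\eta\otimes\eta+(a^{2}-b)\big[(\mathcal{L}_{V}\eta)\otimes\eta+\eta\otimes(\mathcal{L}_{V}\eta)\big],
\]
substitute $\mathcal{L}_{V}g=\kappa g+\nu\,\eta\otimes\eta$ and the expression for $\mathcal{L}_{V}\eta$ from the first step, assemble the left-hand side of $(1.4)$ written with respect to $g^{*},S^{*},\eta^{*}$, and collect the coefficients of $g^{*}$ and of $\eta^{*}\otimes\eta^{*}$. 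This produces two scalar identities which I would solve for $\lambda^{*}$ and $\mu^{*}$ in terms of $a,b,n,\lambda,\mu,p$ (with $p^{*}$ fixed by the scalar-curvature normalization, since $r^{*}\ne r$ after the deformation), and then check that $\lambda^{*},\mu^{*}$ are genuine constants.

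The main obstacle is the last two terms in the displayed expansion: the derivative terms $Vb$, $V(a^{2}-b)$ and, more delicately, the symmetrized product $(\mathcal{L}_{V}\eta)\otimes\eta+\eta\otimes(\mathcal{L}_{V}\eta)$, which for a general soliton vector field is not proportional to $g^{*}$ or to $\eta^{*}\otimes\eta^{*}$ and so would wreck the $\eta$-Einstein form of the deformed soliton equation. This is precisely where the ``certain condition'' in the statement has to intervene. The natural candidates that make these terms collapse are: $a,b$ constant (so the $D$-conformal deformation is $D$-homothetic and all the derivative terms vanish), or $a^{2}=b$, or a structural hypothesis on $V$ forcing $\mathcal{L}_{V}\xi=0$ (equivalently $V$ pointwise collinear with $\xi$, as in Theorem $4.3$), which makes $\mathcal{L}_{V}\eta$ proportional to $\eta$ and hence the cross term a multiple of $\eta\otimes\eta$. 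Once such a condition is imposed, every tensor entering $(1.4)$ for $g^{*}$ is a combination of $g^{*}$ and $\eta^{*}\otimes\eta^{*}$, the coefficient matching of the previous paragraph closes, and what remains is only routine algebra in the parameters $a,b,n,\lambda,\mu,p$.
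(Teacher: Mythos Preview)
Your diagnosis of where the difficulty lies is exactly right: the whole argument stands or falls on whether the cross term $(\mathcal{L}_{V}\eta)\otimes\eta+\eta\otimes(\mathcal{L}_{V}\eta)$ collapses to a multiple of $\eta\otimes\eta$. However, your route to this point, and the way you propose to close the gap, both diverge from the paper.

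First, the detour through Theorem~3.5 is not used. The paper never reduces to the Einstein case $S=-2ng$; it works with a general $\eta$-Einstein Ricci tensor throughout and keeps the \emph{same} constants $\lambda,\mu,p$ for the deformed structure (so there is no ``solving for $\lambda^{*},\mu^{*}$''). The $\eta$-Einstein hypothesis is not preprocessing for the curvature side but is the very condition that kills the residual right-hand side of the deformed soliton equation; indeed, what the paper actually shows is that $(g^{*},V,\lambda,\mu)$ satisfies $(1.4)$ if and only if $S$ has the form $\alpha g+\beta\eta\otimes\eta$ with the specific coefficients recorded after $(5.12)$, and then verifies that $S^{*}$ is again $\eta$-Einstein.

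Second, and more importantly, the obstacle you flag is not removed by an extra hypothesis on $a,b$ or on $V$, but by the identity
\[
(\mathcal{L}_{V}\eta)(X)=\Big[(n-\lambda-\mu)+\big(\tfrac{p}{2}+\tfrac{1}{2n+1}\big)\Big]\eta(X),
\]
(this is the paper's $(5.9)$), which already makes $\mathcal{L}_{V}\eta$ a scalar multiple of $\eta$. The derivation uses only the soliton equation evaluated at $\xi$ together with the structural relation $\mathcal{L}_{V}\xi=\eta(\mathcal{L}_{V}\xi)\,\xi$, which the paper quotes from \cite{RSAG} rather than imposes as a separate assumption. That single line is the ``certain condition'' you were anticipating, and it is what you are missing: once $(5.9)$ is in hand, the expansion of $\mathcal{L}_{V}g^{*}$ you wrote down reduces to a combination of $g$ and $\eta\otimes\eta$, and the remaining computation is exactly the routine algebra you describe --- but for the original $(\lambda,\mu)$, not for new ones.
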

\begin{proof}
Let the data $(g,V,\lambda,\mu)$ be a conformal $\eta$-Ricci soliton on a Kenmotsu manifold $(M,g,\phi,\xi,\eta)$, then from $(3.3)$ using $Y=\xi$ and recalling $(2.10)$ gives
\begin{equation}
(\mathcal{L}_{V}g)(X,\xi)-4n\eta(X)+[2\lambda +2\mu-(p+\frac{2}{2n+1})]\eta(X)=0.
\end{equation}
 Now, taking Lie derivative of $\eta(\xi)=1$ along an arbitrary vector field $V$ yields
  \begin{equation}
(\mathcal{L}_{V}\eta)(\xi)+\eta(\mathcal{L}_{V}\xi)=0.
\end{equation}
Again using the fact $(\mathcal{L}_{V}g)(X,\xi)=(\mathcal{L}_{V}\eta)(X)-g(X,\mathcal{L}_{V}\xi)$ in $(5.5)$, then putting $X=\xi$ and in view of $(5.6)$ we obtain
\begin{equation}
\eta(\mathcal{L}_{V}\xi)=[(\lambda +\mu-n)-(\frac{p}{2}+\frac{1}{2n+1})].
\end{equation}
Recalling $\mathcal{L}_{V}\xi=\eta(\mathcal{L}_{V}\xi)\xi$ from \cite{RSAG}, and using $(5.7)$ we get
\begin{equation}
g(X,\mathcal{L}_{V}\xi)=[(\lambda +\mu-n)-(\frac{p}{2}+\frac{1}{2n+1})]\eta(X).
\end{equation}
In view of $(5.8)$ and $(\mathcal{L}_{V}g)(X,\xi)=(\mathcal{L}_{V}\eta)(X)-g(X,\mathcal{L}_{V}\xi)$, the equation $(5.5)$ reduces to
\begin{equation}
(\mathcal{L}_{V}\eta)(X)=[(n-\lambda -\mu)+(\frac{p}{2}+\frac{1}{2n+1})]\eta(X).
\end{equation}
Now, Lie differentiating $g^*=bg+(a^2-b)\eta\otimes\eta$ along the vector field $V$ and making use of $(5.1)$ we obtain
\begin{eqnarray}
  (\mathcal{L}_{V}g^*)(X,Y) &=& (Vb)g(X,Y)+b(\mathcal{L}_{V}g)(X,Y)+[2a(Va)-(Vb)]\eta(X)\eta(Y) \nonumber\\
   && +(a^2-b)[(\mathcal{L}_{V}\eta)(X)\eta(Y)+\eta(X)(\mathcal{L}_{V}\eta)(Y)],\nonumber
\end{eqnarray}
for all vector fields $X,Y$ on $M$. Using $(5.9)$ the previous equation becomes
\begin{eqnarray}
  (\mathcal{L}_{V}g^*)(X,Y) &=& (Vb)g(X,Y)+b(\mathcal{L}_{V}g)(X,Y)+[2a(Va)-(Vb)]\eta(X)\eta(Y) \nonumber\\
   && +2(a^2-b)[(n-\lambda -\mu)+(\frac{p}{2}+\frac{1}{2n+1})]\eta(X)\eta)(Y).
\end{eqnarray}
Now making use of $(5.1)$, $(5.4)$ and $(5.10)$, we can compute the following expression
\begin{eqnarray}
 && (\mathcal{L}_{V}g^*)(X,Y)+2S^*(X,Y)+[2\lambda -(p+\frac{2}{2n+1})]g^*(X,Y)+2\mu \eta^*(X)\eta^*(Y) \nonumber\\
 &=& 2(1-b)S(X,Y)+[(Vb)+\frac{4n(a^2-b)}{a^2}]g(X,Y) \nonumber\\
 && +[2a(Va)-(Vb)+\frac{2n(a^2-b)(a^2-2)}{a^2}]\eta(X)\eta(Y).
\end{eqnarray}
Hence from $(5.11)$ we can conclude that the data $(g^*,V,\lambda,\mu)$ is also a conformal $\eta$-Ricci soliton iff the right hand side vanishes, i.e.; iff
\begin{equation}
  S(X,Y)=\alpha g(X,Y)+\beta \eta(X)\eta(Y),
  \end{equation}
where
\begin{eqnarray}
  \alpha &=& \frac{1}{2(b-1)}[(Vb)+\frac{4n(a^2-b)}{a^2}] \nonumber\\
  \beta &=& \frac{1}{2(b-1)}[2a(Va)-(Vb)+\frac{2n(a^2-b)(a^2-2)}{a^2}].\nonumber
\end{eqnarray}
Now, substituting $(5.12)$ in $(5.4)$ and making use of $(5.1)$ we obtain
\begin{eqnarray}
 S^*(X,Y) &=& \frac{1}{2(b-1)}[\frac{(Vb)}{b}+\frac{2n(a^2-b)}{a^2}]g^*(X,Y) \nonumber\\
   && +\frac{1}{2(b-1)}[\frac{2(Va)}{a}-\frac{(Vb)}{b}+\frac{2n(a^2-b)(2a^2-b)}{a^4}]\eta^*(X)\eta^*(Y).
\end{eqnarray}
Therefore $(M,g^*)$ is also an $\eta$-Einstein manifold. This completes the proof.
\end{proof}
Now, if we put $a=b=constant$ in $(5.13)$, we get\\
 $S^*(X,Y)=\frac{2n}{a}g^*(X,Y)-\frac{2n(1-2a)}{a^2}\eta^*(X)\eta^*(Y)$. This leads to the following
 \begin{corollary}
    Under $D$-homothetic deformation of a Kenmotsu manifold $(M,g,\phi,\xi,\eta)$, an $\eta$-Einstein conformal $\eta$-Ricci soliton remains invariant.
 \end{corollary}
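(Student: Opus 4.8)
The plan is to derive Corollary 5.4 directly from Theorem 5.3 by specialization. First I would note that the $D$-homothetic deformation listed in item (1) after (5.1) is precisely the subcase of the generalized $D$-conformal deformation (5.1) in which the two functions coincide and are a positive constant, $a=b$; then $Va=Vb=0$ automatically and the requirement that $a,b$ be constant along $\xi$ is trivially met, so every computation of Section 5 applies verbatim.

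Next I would invoke Theorem 5.3: under a generalized $D$-conformal deformation an $\eta$-Einstein conformal $\eta$-Ricci soliton remains a conformal $\eta$-Ricci soliton. Retracing its proof in the present case, I would feed the relations $a=b$ and $Va=Vb=0$ into the master identity (5.11); its right-hand side collapses to a constant multiple of $S(X,Y)$ together with constant multiples of $g(X,Y)$ and $\eta(X)\eta(Y)$, and this expression vanishes precisely because $S$ is $\eta$-Einstein. Hence $(g^{*},V,\lambda,\mu)$ is again a conformal $\eta$-Ricci soliton, with the same $\lambda,\mu$ and the same conformal pressure $p$.

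Finally, to confirm that the deformed soliton is still of $\eta$-Einstein type --- so that the phrase ``remains invariant'' holds literally --- I would set $a=b$ constant in (5.13). Since $Va=Vb=0$ all derivative terms disappear and one reads off constants $\alpha^{*},\beta^{*}$ with $S^{*}=\alpha^{*}g^{*}+\beta^{*}\,\eta^{*}\otimes\eta^{*}$, which is exactly the explicit formula displayed immediately before the corollary. This exhibits $(M,g^{*})$ as an $\eta$-Einstein manifold and completes the proof.

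There is no genuine obstacle: the corollary is a bookkeeping specialization of Theorem 5.3. The only points to watch are that the homothety constant be different from $1$ (so the denominators $b-1$ occurring in $\alpha,\beta$ do not vanish; the excluded value $a=b=1$ is the identity deformation and is trivial) and that the constant coefficients surviving after the substitution be simplified correctly so that the quoted $\eta$-Einstein expression emerges.
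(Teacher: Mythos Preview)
Your proposal is correct and matches the paper's own argument essentially line for line: the paper simply substitutes $a=b=\text{constant}$ into (5.13), reads off the explicit $\eta$-Einstein form of $S^{*}$, and invokes the preceding theorem. Your additional remarks about (5.11) collapsing and about the harmless excluded case $a=b=1$ are sound refinements that the paper leaves implicit.
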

 Again, for $a^2=b$, equation $(5.13)$ reduces to $S^*(X,Y)=\frac{(Va)}{a(a^2-1)}g(X,Y)$. Hence we can conclude that
 \begin{corollary}
   Under a conformally deformed Kenmotsu manifold $(M,g,\phi,\xi,\eta)$, an $\eta$-Einstein conformal $\eta$-Ricci soliton deforms to an Einstein metric.
 \end{corollary}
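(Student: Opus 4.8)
The plan is to start from the assumption that $(g,V,\lambda,\mu)$ is a conformal $\eta$-Ricci soliton on the Kenmotsu manifold $(M,g,\phi,\xi,\eta)$, apply the generalized $D$-conformal deformation $(5.1)$, and compute directly the quantity
\[
(\mathcal{L}_{V}g^*)(X,Y)+2S^*(X,Y)+[2\lambda-(p+\tfrac{2}{2n+1})]g^*(X,Y)+2\mu\,\eta^*(X)\eta^*(Y),
\]
showing it vanishes precisely when $M$ is $\eta$-Einstein, and then verify that the deformed manifold $(M,g^*)$ is again $\eta$-Einstein. First I would extract from $(3.3)$, by setting $Y=\xi$ and using $(2.10)$, the scalar relation governing $(\mathcal{L}_{V}g)(X,\xi)$; combining this with the general identity $(\mathcal{L}_{V}g)(X,\xi)=(\mathcal{L}_{V}\eta)(X)-g(X,\mathcal{L}_{V}\xi)$ and with $\mathcal{L}_{V}\xi=\eta(\mathcal{L}_{V}\xi)\xi$ (a fact quoted from \cite{RSAG} for Kenmotsu manifolds), I would solve for $\eta(\mathcal{L}_{V}\xi)$ and then for $(\mathcal{L}_{V}\eta)(X)$ explicitly in terms of $\lambda,\mu,p,n$; this is what produces $(5.7)$–$(5.9)$.

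Next I would Lie-differentiate the deformation formula $g^*=bg+(a^2-b)\eta\otimes\eta$ along $V$. This brings in $(Vb)$, $(Va)$, $(\mathcal{L}_{V}g)(X,Y)$, and $(\mathcal{L}_{V}\eta)(X)$; substituting the expression for $(\mathcal{L}_{V}\eta)$ obtained above gives $(5.10)$. Then, using the contracted-curvature formula $(5.4)$ for $S^*$ together with $(5.1)$ for $g^*$ and $\eta^*$, I would assemble the full soliton expression for the starred data. The point is that the $(\mathcal{L}_{V}g)(X,Y)$ term appears multiplied by $b$ while $2S(X,Y)$ appears multiplied by $1$; after using the \emph{unstarred} soliton equation $(3.3)$ to replace $(\mathcal{L}_{V}g)(X,Y)+2S(X,Y)$, everything collapses into $2(1-b)S(X,Y)$ plus a $g(X,Y)$-term and an $\eta(X)\eta(Y)$-term with the coefficients displayed in $(5.11)$. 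Hence $(g^*,V,\lambda,\mu)$ is a conformal $\eta$-Ricci soliton if and only if the right-hand side of $(5.11)$ vanishes, which is exactly the $\eta$-Einstein condition $(5.12)$ with the stated $\alpha,\beta$.

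To finish, I would substitute $(5.12)$ back into $(5.4)$ and rewrite the result in terms of $g^*$ and $\eta^*$ using $(5.1)$, obtaining $(5.13)$; this shows $(M,g^*)$ is again $\eta$-Einstein, so the property "$\eta$-Einstein conformal $\eta$-Ricci soliton'' is preserved under the deformation. The corollaries then follow by specializing: $a=b=\mathrm{const}$ kills the derivative terms and gives the $D$-homothetic case, and $a^2=b$ makes the $\eta^*\otimes\eta^*$ coefficient vanish, yielding an Einstein metric.

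The main obstacle I anticipate is purely computational bookkeeping: correctly tracking the functions $a,b$ and their $V$-derivatives through the Lie derivative of $g^*$ and through the contraction $(5.3)\to(5.4)$, and in particular making sure that $g(\phi X,\phi Y)=g(X,Y)-\eta(X)\eta(Y)$ is used consistently so that the $\eta\otimes\eta$ coefficients match. A subtler point worth checking is the justification of $\mathcal{L}_{V}\xi=\eta(\mathcal{L}_{V}\xi)\xi$ and of the claim from \cite{RSAG} that $a,b$ are constant along $\xi$ (equivalently $\xi a=\xi b=0$), since these are what make the computation close; conceptually, though, once $(5.9)$ is in hand the remainder is a direct substitution with no further geometric input required.
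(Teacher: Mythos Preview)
Your proposal is correct and mirrors the paper's approach exactly: you reproduce the derivation of $(5.7)$--$(5.13)$ that constitutes the proof of Theorem~5.1, and then obtain the corollary by the specialization $a^{2}=b$, which kills the $\eta^{*}\otimes\eta^{*}$ coefficient in $(5.13)$ and leaves $S^{*}$ proportional to $g^{*}$. The only cautions you flag (bookkeeping of $a,b$ and the imported facts $\mathcal{L}_{V}\xi=\eta(\mathcal{L}_{V}\xi)\xi$ and $\xi a=\xi b=0$) are precisely the external inputs the paper also invokes without proof.
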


\medskip
\medskip
\section{Example of a conformal $\eta$-Ricci soliton on a $5$-dimensional Kenmotsu manifold}
$~~~$Let us consider the $5$-dimensional manifold $M=\{(u_1,u_2,v_1,v_2,w)\in\mathbb{R}^5:w\neq 0\}$. Define a set of vector fields
$\{e_i: 1\leq i\leq 5\}$ on the manifold $M$ given by
\begin{equation}
e_1=w\frac{\partial}{\partial u_1}, ~~e_2=w\frac{\partial}{\partial u_2}, ~~e_3=w\frac{\partial}{\partial v_1}, ~~e_4=w\frac{\partial}{\partial v_2}, ~~e_5=-w\frac{\partial}{\partial w}.\nonumber
\end{equation}
Let us define the indefinite metric $g$ on $M$ by
\begin{equation}
g(e_i,e_j)=\left\{ \begin{array}{rcl}
1, & \mbox{for}
& i=j \\ 0, & \mbox{for} & i\neq j
\end{array}\right.\nonumber
\end{equation}1
for all $i,j=1,2,3,4,5$. Now considering $e_5=\xi$, let us take the $1$-form $\eta$, on the manifold $M$, defined by
\begin{equation}
\eta(U)=g(U,e_5)=g(U,\xi),~~~\forall U\in TM.\nonumber
\end{equation}
Then it can be observed that $\eta(e_5=1)$. Let us define the $(1,1)$ tensor field $\phi$ on $M$ as
\begin{equation}
\phi(e_1)=e_2, ~~\phi(e_2)=-e_1,~~\phi(e_3)=e_4,~~\phi(e_4)=-e_3,~~\phi(e_5)=0.\nonumber
\end{equation}
Then using the linearity of $g$ and $\phi$ it can be easily checked that
\begin{equation}
\phi^2(U)=-U+\eta(U)\xi,~~g(\phi U,\phi V)=g(U,V)-\eta(U)\eta(V),~~~\forall U,V\in TM.\nonumber
\end{equation}
Hence the structure $(g,\phi,\xi,\eta)$ defines an almost contact metric structure on the manifold $M$.\\
Now, using the definitions of Lie bracket, direct computations give us \\
$[e_i,e_5]=e_i;~~\forall i=1,2,3,4,5$ and all other $[e_i,e_j]$ vanishes.
Again the Riemannian connection $\nabla$ of the metric $g$ is defined by the well-known Koszul's formula which is given by
\begin{eqnarray}
  2g(\nabla_XY,Z) &=& Xg(Y,Z)+Yg(Z,X)-Zg(X,Y) \nonumber\\
   && -g(X,[Y,Z])+g(Y,[Z,X])+g(Z,[X,Y]).\nonumber
\end{eqnarray}
Using the above formula one can easily calculate that \\
$\nabla_{e_i}e_i=- e_5$, $\nabla_{e_i}e_5=-e_i$; for i=1,2,3,4 and all other $\nabla_{e_i}e_j$ vanishes. Thus it follows that $\nabla_X\xi=(X-\eta(X)\xi),~~~\forall X\in TM$. Therefore the manifold $(M,g)$ is a $5$-dimensional Kenmotsu manifold.\\
Now using the well-known formula $R(X,Y)Z=\nabla_X\nabla_YZ-\nabla_Y\nabla_XZ-\nabla_{[X,Y]}Z$ the non-vanishing components of the Riemannian curvature tensor $R$ can be easily obtained as
\begin{multline}
  $~~~~~~~~~~~~~$R(e_1,e_2)e_2=R(e_1,e_3)e_3=R(e_1,e_4)e_4=R(e_1,e_5)e_5=-e_1, \\
  R(e_1,e_2)e_1=e_2,$~~$R(e_1,e_3)e_1=R(e_1,e_3)e_2=R(e_1,e_3)e_5=e_3, \\
  $~~~~~$R(e_1,e_2)e_3=R(e_1,e_2)e_4=R(e_1,e_2)e_5=-e_2,$~~$R(e_1,e_2)e_4=-e_3, \\
  R(e_1,e_2)e_2=R(e_1,e_2)e_1=R(e_1,e_2)e_4=R(e_1,e_2)e_3=e_5,$~~~~$ \\
  R(e_1,e_2)e_1=R(e_1,e_2)e_2=R(e_1,e_2)e_3=R(e_1,e_2)e_5=e_4.$~~~~~~~~~~~~~~~~$ \nonumber
\end{multline}
From the above values of the curvature tensor, we obtain the components of the Ricci tensor as follows
\begin{equation}
S(e_1,e_1)=S(e_2,e_2)=S(e_3,e_3)=S(e_4,e_4)=S(e_5,e_5)=-4.
\end{equation}
Therefore using the above values in the definition of conformal $\eta$-Ricci soliton, we can calculate $\lambda=3+(\frac{p}{2}+\frac{1}{5})$ and $\mu=1$. Hence we can say that for $\lambda=3+(\frac{p}{2}+\frac{1}{5})$ and $\mu=1$, the data $(g,\xi,\lambda,\mu)$ defines a conformal $\eta$-Ricci soliton on the $5$-dimensional Kenmotsu manifold $(M,g,\phi,\xi,\eta)$.

\medskip
\medskip
\section{Conclusion}
$~~~$In this article, we have investigated the effects of conformal $\eta$-Ricci soliton and gradient conformal $\eta$-Ricci soliton on the Kenmotsu metric. More precisely, we have identified the Einstein metric from a large class of metrics conformal $\eta$-Ricci soliton on Kenmotsu manifolds. It is well-known that Einstein manifolds are very special class of manifolds having extensive relevance in the theory of general relativity and hence our results will play an important role in this field of research. Furthermore, it is interesting to investigate conformal $\eta$-Ricci solitons on other contact metric manifolds and there is further scope of research in this direction within the framework of various complex manifolds like K\"{a}hler manifold, Hopf manifold etc. Finally, for future research, we conclude this article by posing a question:
\par
\medskip
\textbf{Question:} Is theorem $3.5$ still true if the $\eta$-Einstein condition is dropped?
\par
\medskip
\medskip

\bebi
\bibitem{AleCar}P. Alegre, A. Carriazo ~{\em Generalized Sasakian space form and conformal changes of the metric},
~Results in Mathematics, ~59,~pp.485-493, ~(2011).
\bibitem{NBa} N. Basu, A. Bhattacharyya, ~{\em Conformal Ricci soliton in Kenmotsu manifold},
~Glob. J. of Adv. Res. on Clas. and Mod. Geom., ~4(1),~pp. 15-21, ~(2015).
\bibitem{Bla} D. E. Blair,~{\em Riemannian Geometry of Contact and Symplectic Manifolds},
~Birkhauser,~Second Edition,~(2010).
\bibitem{CaCra} C. Calin, M. Crasmareanu, ~{\em $\eta$-Ricci solitons on Hopf hypersurfaces in complex space forms},
~Revue Roumaine de Math. Pures et Appl.,~57(1),~pp.53-63,~(2012).
\bibitem{ChoK} J. T. Cho, M. Kimura, ~{\em Ricci solitons and real hypersurfaces in a complex space forms},
~Tohoku Math. J.,~61,~pp.205-212,~(2009).
\bibitem{Fis}A. E. Fischer, ~{\em An Introduction to Conformal Ricci flow},
~Classical and Quantum Gravity, ~Vol. 21, ~Issue 3, ~pp. S171-S218, ~(2004).
\bibitem{H1}R. S. Hamilton, ~{\em Three manifolds with positive Ricci curvature},
~Journal of Differential Geometry 17, ~255-306, ~(1982).
\bibitem{H2}R. S. Hamilton, ~{\em The formation of singularities in the Ricci flow},
~Surveys in Differential Geometry, ~Vol. II, ~(Cambridge, MA, 1993), ~pp. 7-136, ~Int. Press, ~Cambridge, ~MA, ~(1995).
\bibitem{DG2} D. Ganguly, A. Bhattacharyya, ~{\em A study on conformal Ricci solitons in the framework of $(LCS)_n$-manifolds},
~Ganita,~vol-70(2),~pp-201-216,~(2020).
\bibitem{DG3}D. Ganguly, S. Dey, A. Ali, A. Bhattacharyya,,~{\em Conformal Ricci soliton and quasi-Yamabe soliton on generalized Sasakian space form},
~Journal of Geometry and Physics,~vol-169,~104339, ~(2021).
\bibitem{DGPHui} D. G. Prakasha, S. K. Hui, K. Vikas,~{\em On weakly $\phi$-Ricci symmetric Kenmotsu manifolds},
~Int. J. Pure and Appl. Math.,~95(4),515-521~(2014).
\bibitem{AG} R. Sharma, A. Ghosh,~{\em Kenmotsu $3$-metric as a Ricci soliton},
~Chaos Solitons Fractals,~44,~pp.647-650,~(2011).
\bibitem{KK}K. Kenmotsu,~{\em A class of almost contact Riemannian manifold},
~Tohoku Math. J.,~24,~pp.93-103,~(1972).
\bibitem{Naga}H. G. Nagaraja, D. L. Kiran Kumar,~{\em Ricci solitons in Kenmotsu manifold under generalized $D$-conformal deformation},
~Lobachevskii J. of Math.~40(2),~pp.195-200,~(2019).
\bibitem{AASkhKKBais}A. A. Shaikh, K. K. Baishya, S. Eyasmin,~{\em $D$-homothetic deformation of trans-Sasakian structure},
~Demonstr. Math.,~41,~pp.171-188,~(2008).
\bibitem{RSAG} R. Sharma, A. Ghosh,~{\em Sasakian $3$-manifold as a Ricci soliton represents the Heisenberg group},
~Int. J. of Geom. Meth. in Mod. Phys.~8,~pp.149-154,~(2011).
\bibitem{MDS1}M. D. Siddiqi,~{\em Conformal $\eta$-Ricci solitons in $\delta$-Lorentzian trans Sasakian manifolds},
~Int. J. of Maps in Math.,~1(1),~pp.15-34,~(2018).
\bibitem{SuguNaka}S. Suguri, S. Nakayama,~{\em $D$-conformal deformations on almost contact metric structure},
~Tensor(N.S.),~28,~pp.125-129,~(1974).
\bibitem{STan}S. Tanno,~{\em The topology of contact Riemannian manifolds},
~Illinois J. of Math.~12,~pp.700-717,~(1968).
\bibitem{Venk}V. Venkatesha, D. M. Naik, H. A. Kumara~{\em *-Ricci solitons and gradient almost *-Ricci solitons on Kenmotsu manifolds},
~Mathematica Slovaca,~69,~pp.1447-1458,~(2019).
\bibitem{Ya}K. Yano,~{\em Integral formulas in Riemannian geometry},~New York,~Marcel Dekker,~(1970).
\eebi

\aua
    Dipen Ganguly\\
    Department~of~Mathematics, Jadavpur~University\\
    Kolkata-700032, West Bengal,~India.\\
    E-mail: dipenganguly1@gmail.com\\

\end{document}